\documentclass[10pt,a4paper,twoside]{amsart}

\usepackage[utf8]{inputenc}
\usepackage[english]{babel}

\usepackage{amsthm}
\usepackage{amstext}
\usepackage{amsmath}
\usepackage{amsfonts}
\usepackage{amssymb}
\usepackage{mathtools}

\usepackage{tikz}
\usepackage{graphics}
\usepackage{wrapfig}
\usetikzlibrary{matrix,arrows,decorations.pathmorphing}

\usepackage[a4paper]{geometry}
\geometry{verbose,tmargin=2.5 cm,bmargin=2.5 cm,lmargin=3.0 cm,rmargin=3.0 cm}
\usepackage{tikz-cd}
\usepackage{epigraph}

\usepackage{hyperref}
\usepackage{cite}

\usepackage{enumerate}
\usepackage{enumitem}

\author{Matteo Tamiozzo}
\title{Special curves in modular surfaces}
\date{}

\swapnumbers
\newtheorem{thm}[subsubsection]{Theorem}     
\theoremstyle{plain}                    
\newtheorem{prop}[subsubsection]{Proposition}    
\newtheorem{corol}[subsubsection]{Corollary}     
\theoremstyle{definition}               
\newtheorem{defin}[subsubsection]{Definition}
\newtheorem{ex}[subsubsection]{Example}    
\theoremstyle{remark}                   
\newtheorem{rem}[subsubsection]{Remark}      

\newcommand{\R}{\mathbf{R}}     
\newcommand{\C}{\mathbf{C}}     
\newcommand{\Q}{\mathbf{Q}}      
\newcommand{\Z}{\mathbf{Z}}    
\newcommand{\A}{\mathbf{A}}
\newcommand{\V}{\mathcal{V}}
\newcommand{\Hp}{\mathbf{H}}
\newcommand{\Cg}{\mathcal{C}}

\newcommand{\Addr}{{\bigskip
\footnotesize
\textsc{Department of Mathematics, Imperial College London, London SW7 2AZ, UK}

\textit{Email address:} \texttt{m.tamiozzo@imperial.ac.uk}
}}

\numberwithin{equation}{subsection}

\setcounter{tocdepth}{1}

\begin{document}

\begin{abstract}
We show that geodesics in $\Hp$ attached to a maximal split torus or a real quadratic torus in $GL_{2, \Q}$ are the only irreducible algebraic curves in $\Hp$ whose image in $\R^2$ via the $j$-invariant is contained in an algebraic curve.
\end{abstract}

\maketitle

\tableofcontents

\section{Introduction}

Maximal tori in $GL_{2, \Q}$ arise from embeddings of \'etale $\Q$-algebras $E$ of degree 2 in $M_2(\Q)$. Such an algebra is either isomorphic to $\Q \times \Q$, or to an imaginary quadratic field $K$, or to a real quadratic field $F$. If $E=K$ the unique fixed point $\tau$ of $K^\times$ acting on the Poincaré upper half-plane $\Hp$ has coordinates in $\bar{\Q}$, and the classical theory of complex multiplication tells us that the $j$-invariant $j(\tau)$ also belongs to $\bar{\Q}$ (more precisely, to an abelian extension of $K$). Schneider proved that all the $\bar{\Q}$-points in $\Hp$ whose $j$-invariant is an algebraic number are obtained in this way \cite{sch37}; they are called \emph{special points}.

If $E=\Q \times \Q$ or $E=F$ one can instead attach to it a geodesic in $\Hp$, which we call a \emph{special geodesic}. It is the unique geodesic in $\Hp$ whose endpoints in $\mathbf{P}^1(\R)$ are fixed by the action of $E^\times$. We show the following properties of special geodesics.
\begin{enumerate}
\item They are the only geodesics in $\Hp$ containing infinitely many special points; dually, special points are the only ones belonging to infinitely many special geodesics.
\item They are \emph{weakly bialgebraic} curves in $\Hp$, i. e. they are irreducible algebraic curves in $\Hp\simeq \{(x, y) \in \R^2 \mid y>0\}$, and their projection to any modular curve, seen as a surface over $\R$, is contained in an algebraic curve.
\item They are the only weakly bialgebraic curves in $\Hp$.
\item An irreducible curve in $\R^2$ containing infinitely many images via the $j$-invariant of special points must contain the image of a special geodesic.
\end{enumerate}
In the last part of the document we describe the Zariski closure of the image of a special geodesic in the modular curve with full level. We give a condition implying that the image of such a geodesic is algebraic, and we examine some examples in which this property fails: for a geodesic attached to a real quadratic field, non-triviality of the class group turns out to be related with this failure.

Similar results have been studied and established in great generality for subvarieties of Shimura varieties seen as \emph{complex} algebraic varieties: we refer the reader to the survey \cite{kuy18} and the references therein. The main observation in this note is that in the simple case of modular curves, regarding the relevant spaces as real algebraic varieties instead one still obtains a group-theoretic characterisation of (weakly) bialgebraic subvarieties - although the groups which appear are not those classically considered in the theory of Shimura varieties.

This being said, the heart of our proof of the facts listed above rests on a ``base change'' argument allowing to reduce ourselves to a similar problem on the product of two (complex) modular curves, for which (an analogue of) the Ax-Lindemann-Weierstrass theorem and the André-Oort conjecture are known.

The same idea can be used to characterise irreducible algebraic curves in $\R^2$ whose image via the exponential map is algebraic; we treat this simpler case first at the beginning of the document. Finally, let us mention that geodesics in modular curves have recently been used for arithmetic purposes in \cite{rick20, dv21}.

\subsection*{Notations and conventions}

By a subvariety of an algebraic variety we always mean a \emph{closed} subvariety. We will often tacitly identify complex algebraic varieties with their complex points - but we will be careful to distinguish between real algebraic varieties and their real points. We will denote by $\Z_{>0}$ (resp. $\R_{>0}$) the set of positive integers (resp. positive real numbers).

\subsection*{Acknowledgements} We started exploring the topics discussed in this document inspired by \cite{mo20}. We are very grateful to Gregorio Baldi for bringing \cite{mo20} to our attention, and for introducing us to functional transcendence in the real world. We are indebted to Luca Dall'Ava for several stimulating discussions and for the realisation of many pictures, some of which are included in this document. We thank Leonardo Lerer for many helpful discussions and for answering several of our (naive) questions. We also wish to thank Louis Jaburi and Alice Pozzi for interesting discussions related to the contents of this note. The author's research is supported by the ERC Grant 804176.

\section{The exponential map}

\subsection{The complex situation}\label{comp-exp} Let us consider the exponential
\begin{align*}
\exp: \C & \rightarrow \C^\times\\
z & \mapsto e^{2\pi i z}
\end{align*}
and, for every $n \geq 1$, the map $\exp^n: \C^n \rightarrow (\C^\times)^n$ which equals $\exp$ on each component. We identify the source (resp. target) with the set of $\C$-points of $\A^n_\C$ (resp. $\mathbf{G}_{m, \C}^n$). As in \cite[Definition 4.3]{kuy18}, an irreducible algebraic variety $V \subset \A^n_\C$ (resp. $W \subset \mathbf{G}_{m, \C}^n$) is called \emph{bialgebraic} if $\exp^n(V)$ is algebraic (resp. each analytic irreducible component of the preimage of $W$ via $\exp^n$ is algebraic).

\begin{thm}(Ax-Lindemann-Weierstrass)\cite[Corollary 4.1.2, Theorem 4.1.3]{bt20}\label{alw-exp}
\begin{enumerate}
\item An irreducible algebraic variety $V \subset \A^n_\C$ (resp. $W \subset \mathbf{G}_{m, \C}^n$) is bialgebraic if and only if it is a translate of a $\C$-vector subspace of $\C^n$ defined over $\Q$ (resp. a translate of a subtorus of $\mathbf{G}_{m, \C}^n$).
\item Let $V \subset \A^n_\C$ be an irreducible algebraic variety. The Zariski closure of $\exp^n(V)$ is bialgebraic.
\end{enumerate}
\end{thm}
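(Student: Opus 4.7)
My plan is to deduce both parts from Ax's 1971 transcendence theorem for the exponential function: if $h_1, \ldots, h_d$ are holomorphic on a connected open neighbourhood of $0$ in $\C^m$ and no non-trivial $\Q$-linear combination of them is constant, then
\[
\mathrm{tr.deg}_\C\, \C(h_1, \ldots, h_d, e^{h_1}, \ldots, e^{h_d}) \;\geq\; d + \mathrm{rank}\,\mathrm{Jac}(h_1, \ldots, h_d).
\]
One direction of part (1) is immediate: if $V = v_0 + L$ with $L \subset \C^n$ a $\Q$-rational linear subspace, then $\exp^n(L)$ is the subtorus of $\mathbf{G}_{m,\C}^n$ cut out by the monomial equations $\prod z_i^{a_i} = 1$ for $(a_i)$ running over a $\Z$-basis of $\Z^n \cap L^\perp$, so $\exp^n(V)$ is a coset of a subtorus; dually, the analytic irreducible components of $\exp^{-n}$ of such a coset are all translates of $L$ by $\Z^n$, hence algebraic.

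For part (2) and the converse of part (1) I would let $V \subset \A^n_\C$ be irreducible of dimension $k$, let $W$ be the Zariski closure of $\exp^n(V)$, and let $V' = v_0 + L$ be the smallest translate of a $\Q$-rational subspace of $\C^n$ containing $V$, with $d := \dim_\C L$. Since $L$ is $\Q$-rational I can pick a basis $e_1, \ldots, e_d$ of $L$ with $e_j \in \Q^n$. Parameterising a neighbourhood of a smooth point of $V$ in the form $f_i(t) = v_{0,i} + \sum_j h_j(t)\, e_{j,i}$ with $t \in \C^k$, minimality of $L$ forces no non-trivial $\Q$-linear combination of the $h_j$ to be constant, while smoothness at the chosen point gives $\mathrm{rank}\,\mathrm{Jac}(h_1, \ldots, h_d) = k$. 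Ax's theorem then yields
\[
d + k \;\leq\; \mathrm{tr.deg}_\C\, \C(h_1, \ldots, h_d, e^{h_1}, \ldots, e^{h_d}) \;\leq\; k + \dim W,
\]
the upper bound coming from the fact that, upon clearing denominators in the $e_{j,i} \in \Q$, the fields $\C(h_j, e^{h_j})$ and $\C(f_i, e^{f_i})$ are algebraic over each other, while $\mathrm{tr.deg}_\C\, \C(f_i, e^{f_i}) \leq \dim V + \dim W$. Hence $d \leq \dim W$; combined with $W \subset \exp^n(V')$ (from the ``if'' direction) and the irreducibility of $\exp^n(V')$ of dimension $d$, this forces $W = \exp^n(V')$, a coset of a subtorus, proving part (2). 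For the ``only if'' of part (1), in the affine case $W = \exp^n(V)$ has dimension $k$, so $d \leq k$, and combined with $V \subset V'$ of dimension $d$ we get $V = V'$; in the torus case, applying the argument to an algebraic analytic component $V$ of $\exp^{-n}(W)$, which covers $W$ densely and thus satisfies $\dim V = \dim W = k$, yields the same conclusion.

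The main obstacle is, unsurprisingly, Ax's theorem itself, which is a genuine functional-transcendence statement (proved by Ax via differential-algebraic methods). Granted that, everything else is the formal translation between the geometry of $V$ and $\exp^n(V)$ and the algebra of the fields $\C(h_j, e^{h_j})$; the one delicate point is the choice of parameterisation which both converts minimality of $L$ into the hypothesis of Ax and identifies the Jacobian rank with $k = \dim V$.
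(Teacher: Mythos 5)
The paper does not prove this statement at all: it is quoted verbatim from the cited reference \cite[Corollary 4.1.2, Theorem 4.1.3]{bt20} and used as a black box, so there is no ``paper proof'' to compare against. Your derivation is the standard reduction of Ax--Lindemann--Weierstrass to Ax's 1971 functional Schanuel inequality, and it is essentially correct; indeed it is, in substance, the argument carried out in the cited source. The key steps all check out: the existence of a \emph{smallest} translate $V'=v_0+L$ of a $\Q$-rational subspace containing $V$ (intersections of such translates containing $V$ are again such translates); the fact that a relation $\sum c_j h_j=c$ with $c_j\in\Q$ would place $V$ inside a translate of a proper $\Q$-rational subspace of $L$ (using that the $e_j$ have rational coordinates, so a $\Q$-rational left inverse of the matrix $E=(e_{j,i})$ exists, and then irreducibility of $V$ to propagate the local relation); the identification $\mathrm{rank}\,\mathrm{Jac}(h)=k$ from $f=v_0+Eh$ with $E$ injective; and the two-sided transcendence-degree estimate. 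Two points deserve a sentence more care than you give them. First, in the affine ``only if'' direction you need $\dim W=k$, which uses that $\exp^n$ is a local biholomorphism so the (by hypothesis algebraic) image $\exp^n(V)$ has dimension exactly $\dim V$. Second, in the torus case you need that an analytic irreducible component $V$ of $(\exp^n)^{-1}(W)$ has Zariski-dense image in $W$; this follows because $\exp^n$ is a covering of $(\C^\times)^n$, so each connected component of the preimage of $W^{\mathrm{sm}}$ surjects onto $W^{\mathrm{sm}}$, but it is worth saying. Also note the paper's normalisation $\exp(z)=e^{2\pi i z}$, which only rescales the $h_j$ and affects neither the $\Q$-linear independence hypothesis nor the Jacobian rank. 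The genuine content remains Ax's theorem itself, which you correctly isolate as the unavoidable input.
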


\subsection{The real situation} We are now interested in studying the situation described in \ref{comp-exp} for $n=1$, regarding the relevant spaces as \emph{real} rather than \emph{complex} varieties. Precisely, let us consider the algebraic group $\mathrm{Res}_{\C/\R}\mathbf{G}_{m, \C}=\mathrm{Spec} \;  \R[X, Y]\left[\frac{1}{X^2+Y^2}\right]$, which will be denoted by $G$ in this section. We identify $\R^2=\A^2(\R)$ with $\C$ via the map sending $(x, y)$ to $x+iy$. We consider the map
\begin{align*}
E: \R^2 & \rightarrow G(\R)=\R^2 \smallsetminus \{0\}\\
(x, y) & \mapsto (\mathrm{Re} \exp(x+iy), \mathrm{Im} \exp(x+iy)).
\end{align*}

Our aim is to determine irreducible algebraic subvarieties of $\A^2_\R$ whose image via $E$ is an algebraic subvariety of $G$.

\begin{ex}\label{expbialg-ex}
For every $t \in \R$ let $L_t=\{(x, t), x \in \R\}$. This is an irreducible algebraic subvariety of $\R^2$, whose image
\begin{equation*}
E(L_t)=\{(x, y) \in \R^2 \mid x^2+y^2=e^{-2\pi t}\}
\end{equation*}
is also algebraic.

A vertical line $S_t=\{(t, y), y \in \R\}$ has image the half-line $R_t=\{r e^{2\pi i t}, r>0\}\subset G(\R)$, which is semi-algebraic but not algebraic.
\end{ex}

\begin{defin}
A subset $\mathcal{V}\subsetneq \R^2$ is called \emph{strongly bialgebraic} if it satisfies the following conditions:
\begin{enumerate}
\item there exist algebraic subvarieties $V \subset \A^2_\R, W \subset G$ such that $\mathcal{V}=V(\R)$ and $E(\mathcal{V})=W(\R)$;
\item $\V$ cannot be written in the form $\V=V_1(\R)\cup V_2(\R)$ where $V_1, V_2 \subset \A^2_\R$ are algebraic subvarieties and the inclusions $V_i(\R) \subset \V$ are proper for $i=1, 2$.
\end{enumerate}
We call $\mathcal{V}\subsetneq \R^2$ \emph{weakly bialgebraic} if it satisfies the following conditions:
\begin{enumerate}
\item there exist algebraic subvarieties $V \subset \A^2_\R, W \subsetneq G$ such that $\mathcal{V}=V(\R)$ and $E(\mathcal{V})\subset W(\R)$.
\item $\V$ cannot be written in the form $\V=V_1(\R)\cup V_2(\R)$ where $V_1, V_2 \subset \A^2_\R$ are algebraic subvarieties and the inclusions $V_i(\R) \subset \V$ are proper for $i=1, 2$.
\end{enumerate}
\end{defin}

\begin{rem}
The notion of strongly (resp. weakly) bialgebraic subset we introduced mirrors the definition given in the complex setting in \cite[Definition 4.3]{kuy18} (resp. \cite[Definition 1.2.8]{bt20}) - as in \cite{kuy18}, we require irreducibility in our definition. As Example \ref{expbialg-ex} shows, these two notions do not coincide in our situation: every point in $\R^2$ is strongly bialgebraic, and the lines $L_t$ are strongly bialgebraic. On the other hand the lines $S_t$ are weakly bialgebraic but not strongly bialgebraic. We will now show that there are no other examples of weakly bialgebraic subsets $\mathcal{V}\subsetneq \R^2$.
\end{rem}

\begin{thm}\label{special-real-exp}
Let $\V\subset \R^2$ be a weakly bialgebraic subset which is not a singleton. Then either $\V=L_t$ for some $t \in \R$ or $\V=S_t$ for some $t \in \R$.
\end{thm}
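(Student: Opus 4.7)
The plan is to reduce the statement to the complex Ax-Lindemann-Weierstrass theorem (Theorem \ref{alw-exp}) by complexifying the domain and the map $E$. Each of the two real-analytic components of $E$ extends to an entire function on $\C^2$, yielding a holomorphic map $E_\C: \C^2 \to G(\C)$. Under the canonical isomorphism $G_\C \cong \mathbf{G}_{m, \C}^2$, which on $\C$-points sends $(X, Y)$ to $(X + iY, X - iY)$, a direct computation gives
\begin{equation*}
E_\C(x, y) = \bigl(e^{2\pi i(x + iy)},\, e^{2\pi i(-x + iy)}\bigr).
\end{equation*}
After the $\C$-linear change of coordinates $u = x + iy$, $v = -x + iy$ on $\A^2_\C$, the map $E_\C$ becomes the standard complex exponential $\exp^2: (u, v) \mapsto (e^{2\pi i u}, e^{2\pi i v})$.

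Let $\V = V(\R)$ be weakly bialgebraic and not a singleton, with $E(\V) \subset W(\R)$ for some proper $W \subsetneq G$. By the second condition in the definition of weakly bialgebraic I may take $V$ irreducible over $\R$. If $V_\C$ is reducible, its two $\mathrm{Gal}(\C/\R)$-conjugate irreducible components meet in a $0$-dimensional subset containing $V(\R)$, forcing $\V$ to be a singleton - contradiction. Similarly $V_\C$ cannot be $0$-dimensional nor equal to $\A^2_\C$ (the latter would give $\V = \R^2$, incompatible with $W \subsetneq G$). Hence $V_\C$ is an irreducible complex algebraic curve.

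Next I promote the inclusion $E(\V) \subset W(\R)$ to $E_\C(V_\C) \subset W_\C$: for every polynomial $f$ in the ideal of $W_\C$, the composition $f \circ E_\C$ is holomorphic on the Riemann surface $V_\C$ and vanishes on $V(\R)$, which is $1$-real-dimensional in the $1$-complex-dimensional $V_\C$; by the identity theorem it vanishes identically on $V_\C$. Applying Theorem \ref{alw-exp}(2) in the coordinates $(u, v)$, the Zariski closure $Z$ of $\exp^2(V_\C)$ is a translate of a subtorus of $\mathbf{G}_{m, \C}^2$, properly contained in $\mathbf{G}_{m, \C}^2$ because $Z \subset W_\C$, and of positive dimension because the fibres of $\exp^2$ are discrete. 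So $Z$ is cut out by a monomial equation $t_1^m t_2^n = \alpha$ with $\gcd(m, n) = 1$, and $V_\C$ - being irreducible and lying in $(\exp^2)^{-1}(Z)$ - coincides with one line $mu + nv = c$.

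Finally I determine which $(m, n, c)$ produce a $1$-dimensional real locus. Substituting $u = x + iy$, $v = -x + iy$ into $mu + nv = c$ yields $(m - n)x + i(m + n)y = c$, so $(m - n)x = \mathrm{Re}\, c$ and $(m + n)y = \mathrm{Im}\, c$. If $m \neq \pm n$ both coordinates are fixed and $\V$ is a point, excluded. If $m = n = \pm 1$ then $y$ is a real constant $t$ and $x$ is free, giving $\V = L_t$; symmetrically, $m = -n = \pm 1$ gives $\V = S_t$. The main technical step is the use of the identity theorem to go from $E(\V) \subset W(\R)$ to $E_\C(V_\C) \subset W_\C$: without this, the reduction to the complex Ax-Lindemann theorem would not be available.
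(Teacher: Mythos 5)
Your proposal is correct and follows essentially the same route as the paper: complexify the domain so that $E$ becomes $\exp^2$ after the linear change of coordinates $(u,v)=(x+iy,-x+iy)$, use irreducibility and the identity theorem to upgrade the vanishing on $V(\R)$ to vanishing on all of $V_\C$, invoke Theorem \ref{alw-exp} to conclude $V_\C$ lies on a line $mu+nv=c$ with $(m,n)$ rational, and then check which such lines have a one-dimensional real locus. The only differences are cosmetic (you package the paper's maps $f$ and $g$ as the isomorphism $G_\C\cong\mathbf{G}_{m,\C}^2$, and parametrise the final case analysis by $(m,n)$ rather than by the slope).
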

\begin{proof}
\emph{Step 0: preliminaries.} Take $\V\subset \R^2$ weakly bialgebraic, and suppose that it is not a point. By assumption $\V$ is the set of common real zeros of finitely many non-constant polynomials in $\R[X, Y]$. The sum of their squares $S \in \R[X, Y]$ is a polynomial whose set of real zeros is $\V$. As $\V$ is irreducible by definition, it is the vanishing locus of an irreducible factor $P \in \R[X, Y]$ of $S$. Furthermore the set $\V$ is infinite, hence such a $P$ is irreducible in $\C[X, Y]$ as well. In addition there exists a smooth point $(x, y) \in \V$, and a neighbourhood of $(x, y)$ in $\V$ is diffeomorphic to the open interval $(0, 1)$. Finally, by assumption there exists a non-constant polynomial $Q \in \R[X, Y]$ such that $E(\V) \subset \{(x, y) \in \R^2 \smallsetminus \{0\}\mid Q(x, y)=0\}$.

\emph{Step 1: base change.} For $(x, y) \in \R^2$ we have $\overline{\exp(x+iy)}=\exp(-x+iy)$, hence
\begin{equation*}
\mathrm{Re} \exp(x+iy)=\frac{\exp(x+iy)+\exp(-x+iy)}{2}, \; \; \; \mathrm{Im} \exp(x+iy)=\frac{\exp(x+iy)-\exp(-x+iy)}{2i}.
\end{equation*}
We introduce the maps
\begin{align*}
f: \C^2 & \rightarrow \C^2 & g: \C^2 & \rightarrow \C^2\\
(v, w) & \mapsto (v+iw, -v+iw) & (a, b) & \mapsto \left(\frac{a+b}{2}, \frac{a-b}{2i}\right)
\end{align*}
so that the restriction of $g \circ \exp^2 \circ f$ to $\R^2\subset \C^2$ equals $E$.
Let us denote by $C_P$ (resp. $C_Q$) the complex plane curve with equation $P=0$ (resp. $Q=0$). We know that $Q \circ E : \R^2 \rightarrow \R$ vanishes on $\V$, hence the same is true for the map $h=Q \circ g \circ \exp^2 \circ f: \C^2 \rightarrow \C$. It follows that the vanishing locus of the holomorphic function $h$ restricted to the complex curve $C_P$ contains a subset diffeomorphic to $(0, 1)$. As $C_P$ is irreducible it is connected in the Euclidean topology, hence $h$ must vanish identically on $C_P$.

\emph{Step 2: application of Ax-Lindemann-Weierstrass.} The outcome of the previous step is that the image of $C_P$ via the map $g \circ \exp^2 \circ f$ is contained in $C_Q$. In other words $\exp^2(f(C_P)) \subset g^{-1}(C_Q)$. Hence the Zariski closure $Z \subset \mathbf{G}_{m, \C}^2$ of $\exp^2 \circ f(C_P)$ is contained in $g^{-1}(C_Q)$. It follows from Theorem \ref{expbialg-ex} that $Z$ is bialgebraic, and it is a translate of a one-dimensional subtorus of $\mathbf{G}_{m, \C}^2$. The curve $f(C_P)$ is connected because $C_P$ is, hence $f(C_P)$ is contained in a translate of a one-dimensional subspace of $\C^2$ defined over $\Q$.

\emph{Step 3: the final computation.} By construction $\V$ consists of the real points of $C_P$, so $f(\V)\subset f(C_P)$. A line $L \subset \A^2_\C$ with slope in $\Q \cup \{\infty\}$ is either vertical or has equation $Y=rX+\alpha$ with $r \in \Q$ and $\alpha=a+ib \in \C$. Notice that  $f(C_P)$ cannot be contained in a vertical line: indeed in this case there would be $\alpha=a+ib \in \C$ such that every point $(x, y) \in \V$ satisfies $x+iy=a+ib$, contradicting the assumption that $\V$ is not a point. Now assume that $L: Y=rX+\alpha$ is not a vertical line and $f(C_P)\subset L$; then every point $(x, y) \in \V$ satisfies
\begin{equation*}
-x+iy=r(x+iy)+(a+ib)\Rightarrow (r+1)x=-a, \; (r-1)y=-b.
\end{equation*}
As we are supposing $\V$ not to be a point, we must have either $r=-1$, hence $y=\frac{b}{2}$, or $r=1$ and $x=-\frac{a}{2}$. Therefore $\V$ is one of the lines considered in Example \ref{expbialg-ex}.
\end{proof}

\subsubsection{Special points and bialgebraicity for the exponential} By the Gelfond–Schneider theorem if $\alpha \in \bar{\Q}$ is not rational then $e^{2\pi i \alpha}$ is transcendental. In other words the only points $(x, y) \in \R^2$ with algebraic coordinates and such that $E(x, y)$ has algebraic coordinates are those with $x \in \Q$ and $y=0$. The image of such a point in $G(\R)$ is a torsion point - i. e. a root of unity - which will be called a \emph{special point}. With this terminology, Theorem \ref{special-real-exp} has the following consequence.

\begin{corol} Let $W \subsetneq G$ be an irreducible subvariety with more than one real point.
\begin{enumerate}
\item The set $W(\R)$ is the image of a strongly bialgebraic subset of $\R^2$ if and only if it is a $G(\R)$-translate of the unit circle $S^1(\R)$.
\item If $W$ satisfies the equivalent conditions in $(1)$, then $W$ contains a special point if and only if it contains infinitely many special points.
\item If $W(\R)$ contains infinitely many special points then $W(\R)=S^1(\R)$.
\end{enumerate}
\end{corol}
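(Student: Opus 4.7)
The plan is to deduce the three statements from Theorem \ref{special-real-exp}, using two elementary observations: every special point lies on the unit circle $S^1(\R) \subset G(\R)$, since roots of unity have modulus $1$; and the $G(\R)$-translates of $S^1(\R)$, under the multiplication on $G(\R) = \C^\times$, are precisely the circles of positive radius centred at the origin.

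For (1), assume first that $W(\R) = E(\mathcal{V})$ with $\mathcal{V}$ strongly bialgebraic. Since $W(\R)$ has more than one element, $\mathcal{V}$ is not a single point, and Theorem \ref{special-real-exp} yields $\mathcal{V} = L_t$ or $\mathcal{V} = S_t$ for some $t \in \R$. Example \ref{expbialg-ex} shows that $E(S_t)$ is a half-line, which is semi-algebraic but not algebraic, so strong bialgebraicity forces $\mathcal{V} = L_t$; then $W(\R) = E(L_t)$ is a circle centred at the origin, i.e.\ a $G(\R)$-translate of $S^1(\R)$. Conversely, every such translate arises as $E(L_t)$ for a suitable $t$, and $L_t$ is strongly bialgebraic by Example \ref{expbialg-ex}.

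Statement (2) is then immediate: a circle centred at the origin contains a root of unity if and only if its radius equals $1$, if and only if it coincides with $S^1(\R)$, in which case it contains all (infinitely many) roots of unity.

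For (3), observe first that $W$ must be one-dimensional: it is positive-dimensional since $W(\R)$ has more than one point, and it is a proper subvariety of the two-dimensional group $G$. The hypothesis that $W(\R)$ contains infinitely many special points, combined with the fact that all such points lie on $S^1(\R)$, implies that the closed subvariety $W \cap S^1 \subset G$ has infinitely many $\R$-points and is therefore positive-dimensional. The only delicate---but entirely routine---step is to note that any one-dimensional irreducible component of $W \cap S^1$ is contained in each of the one-dimensional irreducible varieties $W$ and $S^1$, and hence coincides with both, giving $W = S^1$ as required.
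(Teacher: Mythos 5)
Your proof is correct and follows essentially the same route as the paper's, which simply records that (1) is a direct consequence of Theorem \ref{special-real-exp}, that (2) is immediate, and that (3) follows from the fact that the Zariski closure of an infinite set of roots of unity in $G$ is $S^1$. You have merely filled in the routine details — identifying the $G(\R)$-translates of $S^1(\R)$ with the circles $E(L_t)$, ruling out the lines $S_t$ because their images are half-lines and hence not algebraic, and the dimension count showing $W = S^1$ — and all of these steps are sound.
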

\begin{proof}
The first point is a direct consequence of Theorem \ref{special-real-exp}, and the second point follows immediately. Finally, $(3)$ follows from the fact that the Zariski closure of an infinite set of roots of unity in $G$ is $S^1$.
\end{proof}

\begin{rem}
The third point in the above corollary is an analogue in our situation of the Manin-Mumford conjecture for complex tori. Observe that the unique irreducible subvariety of $G$ containing infinitely many special points arises from the embedding of groups $S^1 \subset G$, hence one could regard it as a \emph{special subvariety}. The image of the inclusion $\mathbf{G}_{m}(\R)\subset G(\R)$ instead coincides with the Zariski closure of the image via $E$ of the weakly bialgebraic subvariety $S_0$; however it only contains finitely many special points.
\end{rem}

\section{The $j$-invariant}

\subsection{Setup} In this section we set $G=GL_{2, \Q}$; if $H$ is a subgroup of $G(\R)$ we will denote by $H^+\subset H$ the subgroup of matrices with positive determinant. Let
\begin{equation*}
\Hp=\{z=x+iy \in \C \mid y>0\}\simeq G(\R)/\R_{>0} O_2(\R)
\end{equation*}
be the Poincaré upper half-plane, with hyperbolic metric $\frac{\mathrm{d}x^2+\mathrm{d}y^2}{y^2}$. We identify $\Hp$ with a subset of the $\R$-points of $\A^2_\R$; this allows us in particular to talk about algebraic subvarieties of $\Hp$. For each congruence subgroup $\Gamma \subset SL_2(\Z)$ the quotient $\Gamma \backslash \Hp$ is a Riemann surface, which is the analytification of an algebraic curve over $\C$ whose Weil restriction to $\R$ will be denoted by $Y_\Gamma$. Hence $Y_\Gamma$ is a real surface with $Y_\Gamma(\R) \simeq \Gamma \backslash \Hp$; we will call it the \emph{modular surface of level} $\Gamma$. Let $p_\Gamma: \Hp \rightarrow Y_\Gamma(\R)$ be the projection map.

\begin{rem}
As a side remark, let us notice that the varieties $Y_\Gamma$ admit models over number fields. Indeed, each Riemann surface $\Gamma \backslash \Hp$ has a (canonical) model $\tilde{Y}_{\Gamma, \Q(\zeta_n)}$ over a suitable cyclotomic field $\Q(\zeta_n)$, which we may take to be different from $\Q$. Let $\Q(\zeta_n)^+\subset\Q(\zeta_n)$ be the maximal totally real subfield. Then we claim that $Y_{\Gamma, \Q(\zeta_n)^+}=\mathrm{Res}_{\Q(\zeta_n)/\Q(\zeta_n)^+}\tilde{Y}_{\Gamma, \Q(\zeta_n)}$ is a model of $Y_\Gamma$. Indeed, for any $\R$-scheme $S$, we have canonical identifications
\begin{align*}
Hom_\R(S, Y_{\Gamma, \Q(\zeta_n)^+}\times_{\Q(\zeta_n)^+}\R)&=Hom_{\Q(\zeta_n)^+}(S, Y_{\Gamma, \Q(\zeta_n)^+})\\
&=Hom_{\Q(\zeta_n)}(S\times_{\Q(\zeta_n)^+}\Q(\zeta_n), \tilde{Y}_{\Gamma, \Q(\zeta_n)})\\
&=Hom_{\Q(\zeta_n)}(S\times_{\R}\C, \tilde{Y}_{\Gamma, \Q(\zeta_n)})\\
&=Hom_{\C}(S\times_{\R}\C, \tilde{Y}_{\Gamma, \Q(\zeta_n)}\times_{\Q(\zeta_n)}\C)\\
&=Hom_{\R}(S, Y_\Gamma).
\end{align*}
\end{rem}

\begin{defin}
Let $\Gamma \subset SL_2(\Z)$ be a congruence subgroup. A subset $\mathcal{V}\subsetneq \Hp$ is called \emph{weakly bialgebraic} if it satisfies the following conditions:
\begin{enumerate}
\item there exist algebraic subvarieties $V \subset \A^2_\R, W \subsetneq Y_\Gamma$ such that $\mathcal{V}=V(\R) \cap \Hp$ and $p_{\Gamma}(\mathcal{V})\subset W(\R)$.
\item $\V$ cannot be written in the form $\V=(V_1(\R)\cap \Hp)\cup (V_2(\R)\cap \Hp)$ where $V_1, V_2 \subset \A^2_\R$ are algebraic subvarieties and the inclusions $V_i(\R)\cap \Hp \subset \V$ are proper for $i=1, 2$.
\end{enumerate}
\end{defin}

\begin{rem}\label{bialg-indep-lev}
Observe that the fact that a subset $\V \subset \Hp$ is weakly bialgebraic does not depend on $\Gamma$: assume that there exists $\Gamma \subset SL_2(\Z)$ congruence subgroup and $W \subsetneq Y_\Gamma$ such that $\mathcal{V}=V(\R) \cap \Hp$ and $p_{\Gamma}(\mathcal{V})\subset W(\R)$. Let $\Gamma' \subset SL_2(\Z)$ be another congruence subgroup. Then $\Gamma \cap \Gamma'$ has finite index in $\Gamma$, hence the maps $p_{1}:Y_{\Gamma \cap \Gamma'}\rightarrow Y_\Gamma$ and $p_2:Y_{\Gamma \cap \Gamma'}\rightarrow Y_{\Gamma'}$ are finite. We have
\begin{equation*}
p_{\Gamma'}(\V)=p_{2}\circ p_{\Gamma \cap \Gamma'}(\V) \subset (p_2\circ p_1^{-1}(W))(\R)
\end{equation*}
and $p_2\circ p_1^{-1}(W)\subsetneq Y_{\Gamma'}$ is algebraic.
\end{rem}
Our aim is to describe weakly bialgebraic subsets of $\Hp$. In view of the above remark it suffices to consider the case $\Gamma=SL_{2}(\Z)$, so that $Y_{\Gamma}=\A^2_{\R}$ and $p_\Gamma: \Hp \rightarrow \R^2$ is the map sending $z$ to $(\mathrm{Re} \; j(z), \mathrm{Im} \; j(z))$. We will focus on this case in what follows, with the exception of \ref{bern-lemn} below. We start by describing two concrete examples illustrating the general phenomena we will later study.

\subsection{Two examples}

\subsubsection{The positive imaginary axis}\label{posax} The $j$-invariant is injective and takes real values on the half-line $\{(x, y) \in \R^2, x=0, y \geq 1\}$, and $j(it)$ goes to infinity as $t>0$ goes to infinity. Furthermore $j(i)=1728$ and $j(-1/z)=j(z)$ for $z \in \Hp$. It follows that the image via $j$ of the vertical half-line $\mathcal{C}_{0, \infty}=\{(x, y) \in \R^2 \mid x=0, y>0\} \subset \Hp$ is the semi-algebraic set $\{(x, y) \in \R^2 \mid y=0, x \geq 1728\}$. In particular $\Cg_{0, \infty}$ is a weakly bialgebraic curve in $\Hp$. Notice that $j(\Cg_{0, \infty})$ is not algebraic; the smallest algebraic set containing it is the axis $Y=0$ in $\R^2$. This is the image of the union $\Cg_{0, \infty} \cup \partial^+$, where $\partial^+$ is the part of the boundary of the usual fundamental domain for the $SL_2(\Z)$-action with positive first coordinate. Let us end this example by pointing out that $\Cg_{0, \infty}$ is the unique geodesics in $\Hp$ with endpoints $0, \infty$, and it contains infinitely many special points. It can also be described as the orbit of any of its points via the action of the group $\left\{\begin{pmatrix}
t & 0\\
0 & t^{-1}
\end{pmatrix}, t \in \R^\times \right\}$ of matrices of determinant one in $T(\R)$, where $T \subset G$ is the diagonal torus.

\subsubsection{Bernoulli's lemniscate}\label{bern-lemn} We learned the example we are going to explain now from the Mathoverflow post \cite{mo20}, which was the starting point of our investigations.\footnote{We thank Gregorio Baldi for pointing it out to us.} Let us take $\Gamma=\Gamma(2)$, so that $Y_{\Gamma(2)}(\R)=\C \smallsetminus \{0, 1\}$ and the projection map is Klein's $\lambda$-function. Let us consider the matrix $A=\begin{pmatrix}
1 & 1\\
1 & -1
\end{pmatrix}$, and the curve $\Cg_A \subset \Hp$ with equation $Az=\bar{z}$; it is the half-circle with centre $(1, 0)$ and radius $\sqrt{2}$. In other words it is the only geodesic in $\Hp$ with endpoints $1\pm \sqrt{2} \in \R$. It can be described in a slightly different way, which will be generalised later on: we have an embedding
\begin{align*}
\iota: F=\Q(\sqrt{2}) & \rightarrow M_2(\Q)\\
a+b\sqrt{2} & \mapsto a \mathrm{Id}+bA
\end{align*}
inducing a map $\iota_\R: F^1_\R=\{x \in F \otimes_\Q \R \mid N(x)=1\} \rightarrow G(\R)^+$. The geodesic $\Cg_A$ is the $F^1_\R$-orbit of any of its points. In particular $3\mathrm{Id}+2A=\begin{pmatrix}
5 & 2\\
2 & 1
\end{pmatrix} \in \Gamma(2) \cap F^1_\R$ fixes $\Cg_A$. It follows that the image of $\Cg_A$ in $Y_{\Gamma(2)}$ coincides with the image of the compact set $F^1_\R/(\pm \varepsilon^\Z)$, where $\varepsilon=3+2\sqrt{2}$, therefore it is a compact geodesic in $Y_{\Gamma(2)}$. In fact $p_{\Gamma(2)}(\Cg_A)$ is the lemminscate with equation $(X^2+Y^2)((X-1)^2+Y^2)=\frac{1}{16}$. In particular $\Cg_A$ is bialgebraic. Finally, notice that, as in the previous example, the curve $\Cg_A$ contains infinitely many special points.

\subsection{Special geodesics} Geodesics in $\Hp$ are either vertical lines or half-circles with centre on the real axis. They can be described as follows: given $A \in GL_2(\R)$ with trace zero and hyperbolic (i. e. with negative determinant) let us consider the curve $\mathcal{C}_A\subset \Hp$ with equation $Az=\bar{z}$. Writing $z=x+iy$ and $A=\begin{pmatrix}
a & b\\
c & -a
\end{pmatrix}$ we see that points $x+iy \in \mathcal{C}_A$ satisfy the equation
\begin{equation*}
\frac{az+b}{cz-a}=\bar{z} \Leftrightarrow c|z|^2-2 a \mathrm{Re} z-b=0\Leftrightarrow c(x^2+y^2)-2ax-b=0.
\end{equation*}
Hence $\mathcal{C}_A$ is the unique geodesic with endpoints the fixed points of $A$ in $\mathbf{P}^1(\R)$. Every geodesic in $\Hp$ can be obtained in this way: indeed a non-vertical geodesic has equation of the form $(X-x_0)^2+Y^2=r$, and it suffices to choose $a, b, c$ such that $\frac{a}{c}=x_0$ and $\frac{bc+a^2}{c^2}=r$. A vertical geodesic has equation $X=x_0$, so we may choose $a=1, b=-2x_0, c=0$.

\begin{rem}
An alternative way to think about geodesics in $\Hp$ is the following: for every $A \in GL_2(\R)$ with trace zero and negative determinant the map $z \mapsto A\bar{z}$ is an anti-holomorphic involution of $\Hp$, and the curve $\Cg_A$ is the set of fixed points of this involution. This is the point of view adopted by Jaffee \cite{jaf74}, who first suggested to use this idea to study the arithmetic properties of symmetric spaces attached to certain reductive groups when they do not carry a complex structure.
\end{rem}

\subsubsection{Geodesics attached to real quadratic fields} Given a real quadratic field $F=\Q(\sqrt{d})$ and an embedding $\iota: F \rightarrow M_2(\Q)$ the element $\iota(\sqrt{d}) \in GL_2(\R)$ has trace zero and is hyperbolic. The geodesic attached to $\iota(\sqrt{d})$ via the previous construction will be denoted by $\Cg_{F, \iota}$. This construction has the following group-theoretic interpretation (which in particular shows that $\Cg_{F, \iota}$ only depends on $F$ and $\iota$, and not on the choice of $d$). Consider the torus $T_F=\mathrm{Res}_{F/\Q}\mathbf{G}_{m, F}$; the embedding $\iota$ induces an embedding of groups, abusively denoted by the same symbol, $\iota: T_F \hookrightarrow G$, which in turn induces an map
\begin{equation*}
T_F(\R)/\R^\times \rightarrow G(\R)/\R^\times.
\end{equation*}
There are two points in $\R\subset \mathbf{P}^1(\R)$ fixed by $T_F(\Q)$, and $\Cg_{F, \iota}$ is the unique geodesic in $\Hp$ having these endpoints. The image of the subgroup $T_F(\R)^+/\R^\times$ via the above map acts on $\Hp$; the geodesic $\Cg_{F, \iota}$ is the $T_F(\R)^+/\R^\times$-orbit of any of its points.
\begin{rem}
\begin{enumerate}
\item In particular, in the above construction of $\Cg_{F, \iota}$ we may choose $d$ such that $A=\iota(\sqrt{d})$ belongs to $M_2(\Z)$ and the greatest common divisor of the coefficients of $A$ is one.
\item All the embeddings of $F$ in $M_2(\Q)$ are conjugate, hence the resulting geodesics are translates of a given one via the action of $G(\Q)^+$.
\end{enumerate}
\end{rem}

\subsubsection{Geodesics attached to split tori} Another distinguished class of geodesics consists of vertical geodesics with rational first coordinate and half-circles with rational endpoints on the real axis (which give rise to modular symbols). In group-theoretic terms, those arise from embeddings of the split torus $T=\mathbf{G}_{m, \Q}\times \mathbf{G}_{m, \Q}$ in $G$, as follows: letting $\iota: T \hookrightarrow G$ be an embedding, there is a unique geodesic $\mathcal{C}_{\Q \times \Q, \iota}$ in $\Hp$ with endpoints the points of $\mathbf{P}^1(\R)$ fixed by $\iota(T(\Q))$. For example, if $\delta: T \hookrightarrow G$ is the diagonal embedding, then the group $\delta(T(\Q))$ fixes the points $0, \infty \in \mathbf{P}^1(\R)$, and the geodesic $\mathcal{C}_{\Q \times \Q, \delta}$ is the vertical line with equation $X=0$. As above, this is also the $T(\R)^+/\R^\times$-orbit of any of its points. Any other embedding $\iota: T \rightarrow G$ is $G(\Q)^+$-conjugate to $\delta$, hence the attached geodesic is a $G(\Q)^+$-translate of $\mathcal{C}_{\Q \times \Q, \delta}$. This construction accounts for all the geodesics in $\Hp$ whose endpoints in $\mathbf{P}^1(\R)$ are rational.

\begin{defin}
A geodesic in $\Hp$ obtained from an embedding of $T$ or $T_F$ for some real quadratic field $F$ via the above construction is called a \emph{special geodesic}.
\end{defin}

\begin{rem}\label{eqn-spec-geod}
Concretely, special geodesics in $\Hp$ have equation $Az=\bar{z}$ for some hyperbolic trace-zero matrix $A \in M_2(\Q)$ (which can be rescaled so as to have integral coprime coefficients). Such a geodesic is attached to a split torus (resp. to a torus coming from a real quadratic field) if the absolute value of the determinant of $A$ is (resp. is not) a square in $\Q$. Notice that special points satisfy instead equations of the form $Az=z$ with $A \in M_2(\Q)$ elliptic trace-zero matrix.
\end{rem}

Observe that every geodesic in $\Hp$ is an algebraic curve. The following proposition gives a first characterisation of special geodesics among all geodesics in $\Hp$.

\begin{prop}\label{spec-geod-char}
Let $\mathcal{C}\subset \Hp$ be a geodesic. The following are equivalent:
\begin{enumerate}
\item $\Cg$ is special.
\item $\Cg$ contains infinitely many special points.
\item $\Cg$ is defined over $\Q$.
\item The endpoints of $\Cg$ in $\mathbf{P}^1(\R)$ are rational or quadratic conjugates.
\end{enumerate}
\end{prop}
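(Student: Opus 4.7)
The plan is to prove $(1) \Leftrightarrow (3) \Leftrightarrow (4)$ essentially by unwrapping the definitions, then to prove the substantive equivalence $(2) \Leftrightarrow (3)$ by a linear algebra argument using that CM points have rational real parts and rational squared moduli. I expect the main obstacle to be $(2) \Rightarrow (3)$.

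To set things up, I would recall that every geodesic in $\Hp$ is the zero locus of an equation $c(X^2+Y^2) - 2aX - b = 0$ with $(a,b,c) \in \R^3 \setminus \{0\}$ determined up to scalar, and that its endpoints in $\mathbf{P}^1(\R)$ are the roots of $cT^2 - 2aT - b$ (with $\infty$ a root when $c=0$). From this, $(3) \Leftrightarrow (4)$ is immediate, since rationality of the projective point $(a:b:c)$ amounts to rationality of this quadratic up to scaling, which amounts to its roots being a $\mathrm{Gal}(\bar\Q/\Q)$-stable pair in $\mathbf{P}^1(\bar\Q)$; and $(1) \Leftrightarrow (3)$ is the content of Remark \ref{eqn-spec-geod}: a rational triple $(a,b,c)$ produces a trace-zero hyperbolic matrix $A \in M_2(\Q)$, and whether $-\det A = a^2+bc$ is a square in $\Q$ or not determines whether the corresponding embedding arises from the split torus $T$ or from the real quadratic field $\Q(\sqrt{a^2+bc})$.

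For $(3) \Rightarrow (2)$ I would produce CM points on $\Cg$ explicitly. If $\Cg$ is vertical with coordinate $x_0 \in \Q$, then for each $n \in \Z_{>0}$ the point $\tau_n := x_0 + i\sqrt{n}$ is a special point lying on $\Cg$, as it satisfies $T^2 - 2x_0 T + (x_0^2+n) = 0$ over $\Q$. If $\Cg$ is a rational half-circle, the non-empty open interval between the two real roots of $cT^2-2aT-b$ contains infinitely many rationals $x$, and for each such $x$ the quantity $q_x := (2ax+b)/c - x^2$ lies in $\Q_{>0}$, so $\tau_x := x + i\sqrt{q_x}$ is a special point on $\Cg$.

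The main obstacle is $(2) \Rightarrow (3)$. The key observation is that any CM point $\tau = u+iw$ satisfies the monic rational quadratic $T^2 - 2uT + (u^2+w^2) = 0$ with both $u \in \Q$ and $u^2+w^2 \in \Q$, hence produces a rational vector $V(\tau) := (u^2+w^2,\, u,\, 1) \in \Q^3$, and the condition $\tau \in \Cg$ is precisely the single $\R$-linear relation $(c, -2a, -b) \cdot V(\tau) = 0$. Given infinitely many CM points $\tau_n \in \Cg$, the vectors $V(\tau_n)$ cannot all coincide (since $V(\tau)$ determines $\tau$ up to complex conjugation), and since they all have last coordinate $1$ any two distinct ones are $\Q$-linearly independent; hence $W := \mathrm{span}_\Q\{V(\tau_n)\}\subseteq \Q^3$ has $\Q$-dimension at least $2$. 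On the other hand $W$ lies in the kernel of the non-zero real functional $(c, -2a, -b)$, so $\dim_\Q W = 2$ exactly and its orthogonal complement in $\R^3$ is a rational line; therefore $(c, -2a, -b)$ is an $\R$-multiple of a rational vector, so $\Cg$ is defined over $\Q$.
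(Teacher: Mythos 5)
Your proposal is correct and follows essentially the same route as the paper: the equivalence of $(1)$, $(3)$, $(4)$ by unwinding Remark \ref{eqn-spec-geod}, special points produced from rational abscissae for $(3)\Rightarrow(2)$, and for $(2)\Rightarrow(3)$ the fact that a special point $u+iw$ has $u,\,u^2+w^2 \in \Q$, so that two such points with distinct abscissae pin down the geodesic's coefficients rationally. Your linear-algebra packaging of that last step (the rational vectors $(u^2+w^2, u, 1)$ spanning a rational plane whose annihilator is a rational line) is a coordinate-free restatement of the paper's subtraction of the two circle equations, with the minor bonus that it treats vertical and non-vertical geodesics uniformly.
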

\begin{proof}
By construction every special geodesic is defined over $\Q$; furthermore the endpoints of a geodesic defined by an equation with rational coefficients are either rational or quadratic conjugates. On the other hand for any two rational or quadratic conjugate points of $\mathbf{P}^1(\R)$ the unique geodesic having them as endpoints is defined over $\Q$. Now a geodesic $\Cg$ defined over $\Q$ is either vertical with rational first coordinate - hence special - or a half-circle with equation $(X-x_0)^2+Y^2=r$, with $x_0, r \in \Q$; in the latter case we may choose $a, b, c \in \Q$ such that $\frac{a}{c}=x_0$ and $\frac{bc+a^2}{c^2}=r$, and set $A=\begin{pmatrix}
a & b\\
c & -a
\end{pmatrix}$. Then $\Cg$ has equation $Az=\bar{z}$, and by Remark \ref{eqn-spec-geod} it is special. Hence we have established the equivalence of $(1), (3), (4)$.

Finally, let us show that geodesics defined over $\Q$ are precisely those containing infinitely many special points. This is clear for vertical geodesics. Now take a geodesic $\Cg$ with equation $(X-x_0)^2+Y^2=r$. If $x_0, r$ are rational then any point in $\Cg$ with rational first coordinate is special. Conversely, assume that $\Cg$ contains infinitely many special points. Then there exist $(x_1, y_1), (x_2, y_2) \in \Q^2$ with $x_1 \neq x_2$ and $y_1, y_2>0$, and $d_1, d_2 \in \Z_{>0}$ such that
\begin{equation*}
(x_i-x_0)^2+d_iy_i^2=r, \; \; i=1, 2.
\end{equation*}
Taking the difference we obtain
\begin{equation*}
x_1^2-x_2^2+d_1y_1^2-d_2y_2^2-2x_0(x_1-x_2)=0 \Rightarrow x_0 \in \Q \Rightarrow r \in \Q
\end{equation*}
so $\Cg$ is defined over $\Q$.
\end{proof}

\begin{rem}
The computations in the above proof give the following more precise information.
\begin{enumerate}
\item A geodesic is special if and only if it contains at least two distinct special points - in which case it contains infinitely many. However, unlike in the classical case \cite[Theorem 3.5]{kuy18}, containing one special point is not enough for a geodesic to be special - indeed, any half-circle with real irrational centre passing through a special point is not defined over $\Q$.
\item A special point attached to an imaginary quadratic field $K$ is contained in a special geodesic attached to a real quadratic field $F=\Q(\sqrt{d})$ if and only if $d$ is the norm of an element in $K$. 
\end{enumerate}
Let us also notice that the following ``dual" version of the above proposition holds true.
\end{rem}

\begin{prop}
Let $z \in \Hp$ be a point. The following assertions are equivalent:
\begin{enumerate}
\item $z$ is a special point.
\item $z$ belongs to infinitely many special geodesics.
\end{enumerate}
\end{prop}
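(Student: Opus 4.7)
The plan is to encode the condition ``$z \in \Cg_A$'' as a single linear equation in the entries $a, b, c$ of a hyperbolic trace-zero matrix $A = \begin{pmatrix} a & b \\ c & -a \end{pmatrix} \in M_2(\Q)$. Writing $z = x + iy$, this condition unfolds, as in the computation preceding Remark \ref{eqn-spec-geod}, to
\[
c(x^2 + y^2) - 2ax - b = 0.
\]
Thus special geodesics through $z$ are in bijection with the $\Q$-projective classes of nonzero hyperbolic trace-zero rational solutions of this single linear equation on $\Q^3$, and the entire proof reduces to understanding when this linear equation has many rational solutions.

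For $(1) \Rightarrow (2)$, I would start from the fact that $z$ special means $z$ lies in an imaginary quadratic field, which forces both $x$ and $|z|^2 = x^2 + y^2$ to be rational. The linear equation then has rational coefficients, so its $\Q$-solution space is a $2$-dimensional $\Q$-subspace of $\Q^3$; its projectivization is $\mathbf{P}^1(\Q)$, hence infinite. The key verification is that every nonzero solution is actually hyperbolic; this follows at once from the identity
\[
a^2 + bc = (a - cx)^2 + c^2 y^2,
\]
which is strictly positive for $(a,b,c) \neq 0$ because $y > 0$. Hence $z$ lies on infinitely many distinct special geodesics.

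For the converse, I would note that distinct special geodesics through $z$ correspond to distinct $\Q$-lines in the solution space, so infinitely many such geodesics produce at least two linearly independent rational solutions $(a_i, b_i, c_i)$. Viewing the equation as the vanishing of the standard inner product with $(-2x, -1, |z|^2)$, this real vector lies in the $1$-dimensional real orthogonal complement of a rational $2$-plane, so it is a real multiple of a rational vector; the middle coordinate $-1$ being rational then forces the multiplier, and hence $x$ and $|z|^2$, to be rational. Consequently $z$ is a root of the rational quadratic $Z^2 - 2xZ + |z|^2 = 0$, whose discriminant $-4y^2$ is negative, so $\Q(z)$ is imaginary quadratic and the embedding $\Q(z)^\times \hookrightarrow GL_2(\Q)$ given by multiplication on $\Q(z)\simeq \Q^2$ provides a torus fixing $z$; thus $z$ is special.

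I do not anticipate a genuine obstacle: the argument reduces entirely to linear algebra over $\Q$ together with the small hyperbolicity identity. The only subtlety to keep track of is the distinction between rational solutions and their projective classes, since a trace-zero matrix and its nonzero rational scalar multiples define the same special geodesic; infinitely many geodesics must therefore be translated into infinitely many $\Q$-lines of solutions, rather than simply infinitely many rational solutions.
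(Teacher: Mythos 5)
Your proof is correct and follows essentially the same route as the paper: the paper parametrizes the special geodesics through $z$ by the rational pairs $(x_0, r)$ with $(x-x_0)^2+y^2=r$ (the affine chart $c\neq 0$ of your projective solution space) and deduces the converse by subtracting two such equations, which is exactly your rational-orthogonal-complement argument in concrete form. Your extra hyperbolicity check via $a^2+bc=(a-cx)^2+c^2y^2$ correctly replaces the paper's appeal to its earlier characterisation of special geodesics as those defined over $\Q$.
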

\begin{proof}
Take $z=x+iy \in \Hp$ special, so that $x \in \Q$ and $y \in \R_{>0}$ has rational square; let $D=y^2 \in \Q$. For every $x_0 \in \Q$ we have $(x-x_0)^2+y^2=(x-x_0)^2+D$; letting $r=(x-x_0)^2+D$ we see that $z$ belongs to the special geodesic with equation $(X-x_0)^2+Y^2=r$.

Conversely, assume that $z=x+iy \in \Hp$ belongs to infinitely many special geodesics. Then there are two couples of rational numbers $(x_0, r), (x_0', r')$ such that $x_0 \neq x_0'$ and
\begin{equation*}
(x-x_0)^2+y^2=r, \; \; (x-x_0')^2+y^2=r'.
\end{equation*}
Taking the difference we deduce that $x \in \Q$, hence $y^2 \in \Q$ and $z$ is a special point.
\end{proof}

\subsection{The $j$-invariant and bialgebraicity}

We now wish to describe weakly bialgebraic subsets of $\Hp$. In view of Remark \ref{bialg-indep-lev}, it suffices to study weakly bialgebraic subsets for the map
\begin{align*}
J: \Hp & \rightarrow \R^2\\
z & \mapsto (\mathrm{Re} \; j(z), \mathrm{Im} \; j(z)).
\end{align*}

In order to do this we will follow the same general strategy used in the proof of Theorem \ref{special-real-exp}; in particular we will make use of of the Ax-Lindemann-Weierstrass theorem for the map
\begin{equation*}
j^2=j \times j: \Hp^2 \rightarrow \C^2,
\end{equation*}
due to Pila.

\begin{thm}(\hspace{1sp}\cite{pil11})\label{alw-modular}
Let $V \subset \A^2_\C$ be an irreducible algebraic variety. If there is a point $P \in V\cap \Hp^2$ such that $j^2$ maps an open neighbourhood of $P$ in $V$ to an algebraic subvariety properly contained in $\C^2$ then one of the following assertions holds true:
\begin{enumerate}
\item $V$ is a point.
\item $V \cap \Hp^2=\{(x, y) \in \Hp^2 \mid y=c\}$ or $V\cap \Hp^2=\{(x, y) \in \Hp^2 \mid x=c\}$ for some $c \in \C$.
\item $V \cap \Hp^2 =\{(x, y) \in \Hp^2 \mid y=Ax\}$ for some $A \in G(\Q)^+$.
\end{enumerate}
\end{thm}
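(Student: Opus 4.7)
The plan is to follow Pila's approach \cite{pil11}: combine o-minimality (definability in $\R_{\mathrm{an,exp}}$) with the Pila-Wilkie counting theorem to produce a positive-dimensional real algebraic subgroup stabilising $V$, and then classify its possible shapes. Let $V^0$ be the complex analytic component of $V\cap\Hp^2$ through $P$; by hypothesis $j^2(V^0)\subset W$ for some proper algebraic $W\subsetneq\A^2_\C$. Set $\widetilde{W}=(j^2)^{-1}(W)\cap\Hp^2$, so $V^0\subset\widetilde{W}$.

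First I would set up the definable framework. Let $\mathcal{F}\subset\Hp$ be the standard fundamental domain for $SL_2(\Z)$. By a theorem of Peterzil-Starchenko the restriction $j|_\mathcal{F}$ is definable in $\R_{\mathrm{an,exp}}$, hence $\widetilde{W}\cap\mathcal{F}^2$ is a definable subset of $\mathcal{F}^2$. Fixing a small analytic ball $V^0_P\subset V^0$ around $P$, I would consider the definable set
\[
\Sigma=\bigl\{g\in SL_2(\R)^2\cap B\ :\ g\cdot V^0_P\subset\widetilde{W}\bigr\},
\]
where $B$ is any chosen bounded region in $SL_2(\R)^2$. For every $\gamma\in SL_2(\Z)^2$ the translate $\gamma\cdot V^0$ satisfies $j^2(\gamma V^0)=j^2(V^0)\subset W$, so $\gamma\in\Sigma$ whenever $\gamma\in B$.

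Next I would apply Pila-Wilkie. Standard hyperbolic volume estimates show that the number of $\gamma\in SL_2(\Z)^2$ of matrix norm $\leq T$ such that $\gamma\cdot P\in\mathcal{F}^2$ grows polynomially in $T$ (provided $V$ is not already of the target form, which is the case to be excluded by contradiction). The Pila-Wilkie theorem then forces $\Sigma$ to contain a connected positive-dimensional real semi-algebraic arc $\sigma\colon(-\varepsilon,\varepsilon)\to SL_2(\R)^2$ with $\sigma(0)=\mathrm{id}$. Analytic continuation and differentiation at the identity produce a nontrivial connected real algebraic subgroup $H\subset SL_2(\R)^2$ with $H\cdot V^0_P\subset\widetilde{W}$; by dimension considerations $H\cdot V^0=V^0$, so $H$ stabilises $V^0$.

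Finally I would classify $V^0$ from $H$. The proper connected real algebraic subgroups $H\subset SL_2(\R)^2$ admitting a complex algebraic orbit in $\Hp^2$ are, up to conjugation inside $SL_2(\R)\times SL_2(\R)$, of three kinds: the factor subgroups $SL_2(\R)\times\{\mathrm{id}\}$ and $\{\mathrm{id}\}\times SL_2(\R)$, and a graph $\bigl\{(g,A^{-1}gA)\ :\ g\in SL_2(\R)\bigr\}$ for some $A\in SL_2(\R)$. The first two cases produce the horizontal and vertical lines of conclusion (2); the third produces a curve $\bigl\{(z,Az)\ :\ z\in\Hp\bigr\}$, case (3). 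The constraint $j^2(V^0)\subset W\subsetneq\A^2_\C$ then forces $j(z)$ and $j(Az)$ to satisfy a nontrivial algebraic relation, and by the classical theory of modular polynomials this happens exactly when $A$ lies in $G(\Q)^+$ up to scalar. The main obstacle will be the counting step: obtaining the height-distance comparison that feeds Pila-Wilkie, and upgrading a mere semi-algebraic arc in $\Sigma$ to an honest stabiliser of $V^0$. The definability input is provided by Peterzil-Starchenko, and once $H$ is in hand, the final classification is classical.
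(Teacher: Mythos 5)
The paper does not prove this theorem at all: it is imported from Pila \cite{pil11}, and the citation is the entire ``proof''. So your sketch is not competing with an argument in the paper but with Pila's original one, and as an outline of that argument it is faithful in its broad strokes: Peterzil--Starchenko definability of $j$ on a fundamental domain, Pila--Wilkie applied to the definable family of real translates preserving $(j^2)^{-1}(W)$, extraction of a positive-dimensional stabiliser from the semialgebraic locus, and classification of the possible stabilisers.

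Taken as a proof, however, it has concrete gaps. The counting statement as written --- that the number of $\gamma\in SL_2(\Z)^2$ of norm $\le T$ with $\gamma\cdot P\in\mathcal{F}^2$ grows polynomially --- is false for a fixed $P$: up to stabilisers there is exactly one such $\gamma$. What is actually needed is a lower bound $\gg T^{\delta}$ on the number of $\gamma$ of height $\le T$ for which $\gamma\mathcal{F}^2$ meets the component $V^0$ (each such $\gamma$ contributing an integer point of $\Sigma$), and establishing this height--distance comparison for an algebraic curve approaching the boundary of $\Hp^2$ is the technical heart of Pila's argument; you flag it as ``the main obstacle'' but do not supply it. Second, the classification of connected real algebraic subgroups $H\subset SL_2(\R)^2$ is asserted rather than proved and omits precisely the cases that must be excluded: unipotent subgroups, tori, Borel subgroups and their products are proper positive-dimensional subgroups, and one must argue that stabilising a one-complex-dimensional algebraic $V^0$ forces $H$ to contain a full factor or a twisted diagonal copy of $SL_2(\R)$. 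Third, to promote $H\cdot V^0_P\subset\widetilde{W}$ to $H\cdot V^0=V^0$ one should work with a \emph{maximal} irreducible algebraic subvariety of $(j^2)^{-1}(W)$ containing $V^0$; without that maximality the ``dimension considerations'' you invoke do not close. The final step, that algebraic dependence of $j(z)$ and $j(Az)$ forces $A\in G(\Q)^+$ up to scalars, is classical and unobjectionable.
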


\begin{thm}\label{bialg-mod-surf}
Weakly bialgebraic subvarieties of $\Hp$ are either points or special geodesics.
\end{thm}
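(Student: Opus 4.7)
The plan is to mirror closely the argument of Theorem \ref{special-real-exp}, replacing the Ax-Lindemann-Weierstrass theorem for $\exp^2$ by Pila's theorem (Theorem \ref{alw-modular}) for $j^2$. The base change is engineered using the identity $j(-\bar z)=\overline{j(z)}$, which is a consequence of $j$ having integer Fourier coefficients and allows one to express $\mathrm{Re}\, j(z)$ and $\mathrm{Im}\, j(z)$ in terms of the two holomorphic quantities $j(z)$ and $j(-\bar z)$.

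In the preliminary step, assume $\V$ is not a point. Exactly as in the proof of Theorem \ref{special-real-exp}, there is an irreducible polynomial $P \in \R[X,Y]$, still irreducible in $\C[X,Y]$, whose real zero set in $\Hp$ is $\V$; let $C_P \subset \C^2$ be its (irreducible, Euclidean-connected) complex vanishing locus. The weak bialgebraicity hypothesis provides a non-constant $Q \in \R[X,Y]$ with $Q \circ J$ vanishing on $\V$, and a smooth point of $\V$ admits a neighbourhood diffeomorphic to $(0,1)$.

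For the base change, I would introduce the $\C$-linear isomorphisms
\begin{align*}
f: \C^2 & \rightarrow \C^2 & g: \C^2 & \rightarrow \C^2\\
(v,w) & \mapsto (v+iw, -v+iw) & (a,b) & \mapsto \left(\tfrac{a+b}{2}, \tfrac{a-b}{2i}\right),
\end{align*}
so that $f(\Hp)=\{(z,-\bar z): z \in \Hp\}\subset \Hp^2$ and $g \circ j^2 \circ f$ restricted to $\Hp$ equals $J$. Setting $V = f(C_P)$, an irreducible complex curve, and $\tilde Q = Q \circ g$, a non-constant complex polynomial, the hypothesis becomes that $\tilde Q \circ j^2$ vanishes on the real arc $f(\V) \subset V \cap \Hp^2$. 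Since $V \cap \Hp^2$ is open in the irreducible curve $V$ and $\tilde Q \circ j^2$ is holomorphic there with a one-dimensional real zero set accumulating on itself, the identity theorem forces it to vanish on the whole connected component of $V \cap \Hp^2$ meeting $f(\V)$. This is exactly the hypothesis of Theorem \ref{alw-modular} for $V$ with the proper subvariety $g^{-1}(\{Q=0\}) \subsetneq \C^2$.

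Applying Theorem \ref{alw-modular}, case (1) is excluded because $V$ is a curve, and in case (2) the constraint that one coordinate of $f(v,w) = (v+iw,-v+iw)$ be constant on $\V$ forces $\V$ to be a single point, contradicting the hypothesis. So we are in case (3): $V = \{(x,y) \in \C^2 : y = Ax\}$ for some $A = \left(\begin{smallmatrix} a_{11} & a_{12}\\ a_{21} & a_{22}\end{smallmatrix}\right) \in G(\Q)^+$. Pulling back the defining relation along $f$ and substituting $z=v+iw$, a direct computation shows $C_P$ is cut out (up to a complex scalar) by
\begin{equation*}
-a_{21}(v^2+w^2) - (a_{11}+a_{22})v + i(a_{22}-a_{11})w - a_{12};
\end{equation*}
the reality of $P$ forces $a_{11}=a_{22}$, and then $\V$ satisfies the rational equation $a_{21}(X^2+Y^2) + 2a_{11}X + a_{12} = 0$. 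By Remark \ref{eqn-spec-geod} this is the equation of a special geodesic, and irreducibility of $P$ and of this defining polynomial identifies $\V$ with the whole geodesic. The main technical difference from the exponential case is that $j^2$ is defined only on $\Hp^2$ rather than on all of $\C^2$, so the step ``$h$ vanishes on all of $C_P$'' has to be replaced by vanishing on an appropriate connected component of $V \cap \Hp^2$; Pila's formulation of Theorem \ref{alw-modular} in terms of a neighbourhood of a single point accommodates this directly.
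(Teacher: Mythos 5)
Your proposal is correct and follows essentially the same route as the paper: the same base change via $f$ and $g$ exploiting $\overline{j(z)}=j(-\bar z)$, the same use of the identity theorem on a connected piece of $f(C_P)\cap \Hp^2$ to satisfy the local hypothesis of Theorem \ref{alw-modular}, and the same elimination of cases (1) and (2) followed by the trace-zero computation in case (3) (the paper deduces $a_{11}=a_{22}$ by separating real and imaginary parts on the infinite set $\V\subset\R\times\R_{>0}$, which is equivalent to your appeal to the reality of $P$). The only point you leave out is the converse inclusion --- that special geodesics really are weakly bialgebraic --- which the paper settles via the modular polynomials $\Phi_N$ in section \ref{mod-pol}.
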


\begin{proof}
\emph{Step 0: preliminaries.} Take $\V\subset \Hp$ weakly bialgebraic; then $\V$ is the set of zeros in $\Hp$ of an irreducible polynomial $P(X, Y) \in \R[X, Y]$. Assume that $\V$ is not a point; then it is infinite, hence $P$ is irreducible in $\C[X, Y]$. There exists a smooth point $x \in \V$, and a neighbourhood of $x$ in $\V$ is diffeomorphic to the open interval $(0, 1)$. Finally, by assumption there exists a non-constant polynomial $Q \in \R[X, Y]$ such that $J(\V) \subset \{(x, y) \in \R^2 \mid Q(x, y)=0\}$.

\emph{Step 1: base change.} Recall that for $z \in \C$ we have $\overline{j(z)}=j(-\bar{z})$. Hence for $(x, y) \in \R\times \R_{>0}$ we have
\begin{equation*}
\mathrm{Re} \; j(x+iy)=\frac{j(x+iy)+j(-x+iy)}{2}, \; \; \; \mathrm{Im} \; j(x+iy)=\frac{j(x+iy)-j(-x+iy)}{2i}.
\end{equation*}
Consider the maps
\begin{align*}
f: \C^2 & \rightarrow \C^2 & g: \C^2 & \rightarrow \C^2\\
(v, w) & \mapsto (v+iw, -v+iw) & (a, b) & \mapsto \left(\frac{a+b}{2}, \frac{a-b}{2i}\right);
\end{align*}
let $U\subset \C^2$ be the set of points $(v, w)$ such that the imaginary parts of $v+iw, -v+iw$ are positive. In particular $\R\times \R_{>0}$ is contained in $U$, and the restriction of $g \circ j^2 \circ f$ to $\R\times \R_{>0}$ equals $J$.
Let $C_P$ (resp. $C_Q$) be the complex plane curve with equation $P=0$ (resp. $Q=0$). The map $h=Q \circ g \circ j^2 \circ f: U \rightarrow \C$ vanishes on $\V$; since $\V$ contains a subset $I$ diffeomorphic to $(0, 1)$, the holomorphic map $h$ must vanish identically on the connected component $C$ of $C_P\cap U$ containing $I$.

\emph{Step 2: application of Ax-Lindemann-Weierstrass.} The outcome of the previous step is that the image of $C$ via the map $g \circ j^2 \circ f$ is contained in $C_Q$. In other words $j^2(f(C))$ is contained in the algebraic subvariety $g^{-1}(C_Q)\subsetneq \A^2_\C$. Therefore we deduce that $f(C_P)$ is a subvariety of $\A^2_\C$ as in Theorem \ref{alw-modular}.

\emph{Step 3: the final computation.} As $f(\V)\subset f(C_P)$ and $\V$ is not a point we deduce that $f(C_P)\cap \Hp^2$ has equation either $X=c$, or $Y=c$, or $Y=AX$ for some $A \in G(\Q)^+$ and $c  \in \C$. In the first case we find that every $(x, y) \in \V$ satisfies $x+iy=c$, contradicting the assumption that $\V$ is not a point; similarly, the second case cannot occur. Therefore we deduce that there exists $A \in G(\Q)^+$ such that every $(x, y) \in \V$ safisfies
\begin{equation*}
-x+iy=A(x+iy).
\end{equation*}
Let $\tilde{A}=\begin{pmatrix} -1 & 0\\
0 & 1
\end{pmatrix}A$; writing $\tilde{A}=\begin{pmatrix}
a & b\\
c & d
\end{pmatrix}$ the previous equation becomes
\begin{equation*}
c(x^2+y^2)+(d-a)x-(a+d)iy-b=0.
\end{equation*}
In particular we must have $a=-d$. By Remark \ref{eqn-spec-geod}, we deduce that every weakly bialgebraic set $\V$ which is not a point must be a special geodesic.

Finally, special geodesics are indeed weakly bialgebraic, as explained below in section \ref{mod-pol}.
\end{proof}

\begin{rem}
Let $\mathcal{V}\subset \Hp$ be a weakly bialgebraic subset which is not a singleton. The above theorem tells us in particular that $\mathcal{V}$ is an orbit for the action of a subgroup of $PSL_2(\R)$ arising from the group of real points of a split or real quadratic torus in $G$. Letting $\iota: \Hp \rightarrow \Hp \times \Hp$ be the map sending $z$ to $(z, - \bar{z})$, Step 3 of our argument rests on the observation that the pullback via $\iota$ of a subvariety with equation $Y=AX$ for $A \in G(\Q)^+$ turns out to be either a singleton or a geodesic attached to a torus as above. There is a more conceptual way to show that each connected component $\mathcal{V}^0 \subset \mathcal{V}$ must be the orbit of a connected Lie subgroup of $PSL_2(\R)$, which may be of use to generalise Theorem \ref{bialg-mod-surf}. With the same notations as in the proof, consider the commutative diagram
\begin{center}
\begin{tikzcd}
\Hp \arrow[r, "\iota"] \arrow[d, "J"]
& \Hp \times \Hp \arrow[d, "g \circ j^2"] \\
\R^2 \arrow[r] & \C^2
\end{tikzcd}
\end{center}
where the bottom arrow is the natural inclusion. Steps 1 and 2 in the proof of Theorem \ref{bialg-mod-surf} (which rely on Theorem \ref{alw-modular}) imply that $f(C_P)\cap \Hp^2$ is a totally geodesic submanifold of the symmetric space $\Hp^2$. Therefore the same is true for $\iota^{-1}(f(C_P)\cap \Hp^2)=\mathcal{V}$, so $\mathcal{V}^0 \subset \Hp$ is a connected totally geodesic submanifold. It follows from the proof of \cite[Chapter IV, Theorem 7.2]{helg78} that $\mathcal{V}^0$ is the orbit of a connected Lie subgroup of $PSL_2(\R)$ - whose Lie algebra arises from a Lie triple system as in \emph{loc. cit.}
\end{rem}

\subsection{Modular polynomials and special geodesics}\label{mod-pol}

\subsubsection{Modular curves in the modular surface} For $N \geq 1$ let $\Phi_N(X, Y) \in \Z[X, Y]$ be the $N$-th modular polynomial. Recall that $\Phi_N(X, Y)=\Phi_N(Y, X)$ if $N>1$, and $\Phi_N$ is irreducible in $\C(X)[Y]$. It follows that $\Phi_N$ is also irreducible in $\C[X, Y]$. Let $\tilde{\Phi}_N(X, Y)=\Phi_N(X+iY, X-iY)$. If $N=1$ then $\tilde{\Phi}_N(X, Y)=2iY$, whereas for $N>1$ we have
\begin{enumerate}
\item $\tilde{\Phi}_N(X, Y) \in \Z[X, Y]$ and $\tilde{\Phi}_N(X, -Y)=\tilde{\Phi}_N(X, Y)$;
\item $\tilde{\Phi}_N(X, Y)$ is irreducible in $\C[X, Y]$.
\end{enumerate}
The first (resp. second) property follows from symmetry (resp. irreducibility) of $\Phi_N$. Let
\begin{equation*}
\mathcal{Z}_N = \{\tilde{\Phi}_N=0\} \subset \mathbf{A}^2_{\R};
\end{equation*}
the above properties imply that $\mathcal{Z}_N$ is a geometrically irreducible plane curve, symmetric with respect to the axis $Y=0$.

Now take $A \in G(\Q)$ with trace zero and negative determinant and consider the curve $\Cg_A$ in $\Hp$ with equation $Az=\bar{z}$. The curve $\Cg_A$ is unchanged if we replace $A$ by $\lambda A$ for some $\lambda \in \Z_{>0}$, hence we may (and will) assume that $A$ has integral coprime coefficients. Let $N$ be the absolute value of the determinant of $A$; if $z$ is a point of $\Cg_A$ the elliptic curves attached to the lattices $\Z\oplus z\Z$ and $\Z \oplus \bar{z}\Z$ are related by an isogeny with cyclic kernel of cardinality $N$, hence we have $\Phi_{N}(j(z), j(-\bar{z}))=0$. As $j(-\bar{z})=\overline{j(z)}$ we see that $J(\Cg_A) \subset \R^2$ is contained in the curve $\mathcal{Z}_N$. This shows that every special geodesic is weakly bialgebraic. Notice however that the curve $\mathcal{Z}_N$ may contain distinct images of geodesics $\Cg_A$; this is related to the existence of trace-zero matrices of determinant $-N$ which are not $SL_2(\Z)$-conjugate. We examine this phenomenon below.

\subsubsection{The curve $\mathcal{Z}_1$} For $N=1$ we obtain the curve $Y=0$. If $A \in M_2(\Z)$ has trace zero and determinant $-1$ then $J(\Cg_A)$ is contained in $\mathcal{Z}_1$. If we take $A^1_1=\begin{pmatrix}
-1& 0\\
0 & 1
\end{pmatrix}$ then the curve $\Cg_{A^1_1}$ is the positive imaginary axis. As observed in Example \ref{posax}, its image is the closed half-line $\{(x, y) \in \R^2 \mid y=0, x \geq 1728\}$. For $A^1_2=\begin{pmatrix}
0 & 1\\
1 & 0
\end{pmatrix}$ the curve $\Cg_{A^1_2}$ is the intersection of the unit circle with centre at the origin and the upper half-plane. Notice that $A^1_1$ and $A^1_2$ are not $SL_2(\Z)$-conjugate, and the images $J(\Cg_{A^1_1})$ and $J(\Cg_{A^1_2})$ do not coincide. One checks that every matrix $A \in M_2(\Z)$ with trace zero and determinant -1 is $SL_2(\Z)$-conjugate either to $A^1_1$ or to $A^1_2$, and we have
\begin{equation*}
\mathcal{Z}_1(\R)=J(\Cg_{A^1_1}) \cup J(\Cg_{A^1_2}).
\end{equation*}
\subsubsection{The curve $\mathcal{Z}_N$ for $N$ squarefree} Take $N>1$ squarefree; the ring $\Z[\sqrt{N}]$ is an order in the real quadratic field $\Q(\sqrt{N})$, and every matrix in $M_2(\Z)$ with determinant $-N$ has coprime coefficients. Hence for every $SL_2(\Z)$-conjugacy class of matrices with integral coefficients, trace zero and determinant $-N$ we obtain a curve $J(\Cg_A)\subset \mathcal{Z}_N(\R)$, where $A$ is any matrix in the given conjugacy class. The set of $SL_2(\Z)$-conjugacy classes of matrices in $M_2(\Z)$ with trace zero and determinant $-N$ is in bijection with the set $Cl^+(\Z[\sqrt{N}])$ of (not necessarily invertible) narrow ideal classes of $\Z[\sqrt{N}]$ (see \cite[Remark 9]{con}). Identifying these two sets, we have an inclusion
\begin{equation*}
\bigcup_{A \in Cl^+(\Z[\sqrt{N}])} J(\Cg_A)\subset \mathcal{Z}_N(\R).
\end{equation*}
In fact, the above inclusion is an equality. Indeed, if $(x, y) \in \R^2$ satisfies $\Phi_N(x+iy, x-iy)=0$, writing $x+iy=j(z)$ we see that the elliptic curves attached to the lattices $\Z\oplus z\Z$ and $\Z \oplus (-\bar{z})\Z$ are related by an isogeny with cyclic kernel of cardinality $N$, hence we must have $-\bar{z}=\frac{az+b}{cz+d}$ for some matrix $\tilde{A}=\begin{pmatrix}
a & b\\
c & d
\end{pmatrix} \in M_2(\Z)$ with determinant $N$. The matrix $A=\begin{pmatrix} -1 & 0\\
0 & 1
\end{pmatrix}\tilde{A}$ has determinant $-N$, and $Az=\bar{z}$. Furthermore, as $z \in \Hp$ the trace of $A$ must be zero, and $(x, y)$ belongs to $J(\Cg_A)$.

Let us also observe that if $A \in M_2(\Z)$ has trace zero and determinant $-N$, then the same is true for $-A$, and $\Cg_{-A}=\Cg_A$. Letting $\widetilde{Cl}^+(\Z[\sqrt{N}])$ be the quotient of $Cl^+(\Z[\sqrt{N}])$ by the equivalence relation identifying $A$ with $-A$, we obtain the following proposition.
\begin{prop}\label{Znsqfree}
Let $N>1$ be a squarefree integer, and $\mathcal{Z}_N \subset \mathbf{A}^2_\R$ the curve with equation $\tilde{\Phi}_N(X, Y)=0$. Then
\begin{equation*}
\bigcup_{A \in \widetilde{Cl}^+(\Z[\sqrt{N}])} J(\Cg_A) = \mathcal{Z}_N(\R).
\end{equation*}
Furthermore if $A, B \in M_2(\Z)$ are two matrices with trace zero, determinant $-N$ and distinct image in $\widetilde{Cl}^+(\Z[\sqrt{N}])$ then $J(\Cg_A) \neq J(\Cg_B)$.
\end{prop}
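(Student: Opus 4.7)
My plan is to treat the two assertions separately, noting that the equality of sets is essentially already established in the paragraphs preceding the statement, while the injectivity claim is the main new content.

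For the equality $\bigcup_{A} J(\Cg_A) = \mathcal{Z}_N(\R)$, the inclusion $\subseteq$ is immediate from the identity $\Phi_N(j(z), j(-\bar z)) = 0$ on $\Cg_A$ whenever $\det A = -N$. For $\supseteq$ I would take $(x, y) \in \mathcal{Z}_N(\R)$, write $x + iy = j(z)$, invoke the isogeny interpretation of $\Phi_N$ to obtain $\tilde A \in M_2(\Z)$ of determinant $N$ with $-\bar z = \tilde A z$, and set $A = \mathrm{diag}(-1, 1)\tilde A$. Then $\det A = -N$, and $z \in \Hp$ forces $\mathrm{tr}(A) = 0$ by the same computation as at the end of Step 3 in the proof of Theorem \ref{bialg-mod-surf}. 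Because $N$ is squarefree, $A$ automatically has coprime entries (a common divisor $d > 1$ would force $d^2 \mid N$), so $A$ indeed gives a class in $Cl^+(\Z[\sqrt N])$ with $(x, y) \in J(\Cg_A)$.

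For the injectivity statement I plan to argue the contrapositive: if $J(\Cg_A) = J(\Cg_B)$, I will produce $\gamma \in SL_2(\Z)$ with $B = \pm \gamma A \gamma^{-1}$. The first step is to descend from $\R^2$ to $\Hp$. Since $J$ factors through $p\colon \Hp \to SL_2(\Z)\backslash \Hp$, the hypothesis reads $\Cg_B \subset \bigcup_{\gamma \in SL_2(\Z)} \gamma \Cg_A$. Two distinct geodesics in $\Hp$ meet in at most one point, so if no translate $\gamma \Cg_A$ were to equal $\Cg_B$, the geodesic $\Cg_B$ would be covered by a countable collection of sets, each of cardinality at most one, contradicting the fact that $\Cg_B$ is uncountable. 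Hence $\gamma \Cg_A = \Cg_B$ for some $\gamma \in SL_2(\Z)$.

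The second step is a short matrix computation. For every $z \in \Cg_A$, the point $\gamma z$ lies in $\Cg_B$, so $B(\gamma z) = \overline{\gamma z}$; since $\gamma$ has real entries, $\overline{\gamma z} = \gamma \bar z = \gamma A z$. Thus $B \gamma$ and $\gamma A$ induce the same Möbius transformation on the infinite set $\Cg_A$, so they agree up to a real scalar: $B \gamma = \lambda \gamma A$ with $\lambda \in \R^\times$. Comparing determinants yields $\lambda^2 = 1$, hence $B = \pm \gamma A \gamma^{-1}$. By the bijection between $SL_2(\Z)$-conjugacy classes of trace-zero determinant $-N$ matrices and $Cl^+(\Z[\sqrt N])$, this says exactly that $A$ and $B$ have the same image in $\widetilde{Cl}^+(\Z[\sqrt N])$. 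The main obstacle is the descent in Step 1 — ruling out that $\Cg_B$ assembles itself from intersections with many different translates $\gamma \Cg_A$ — and the cardinality argument above resolves it using only the fact that two distinct geodesics in $\Hp$ meet in at most one point.
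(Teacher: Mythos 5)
Your proposal is correct and follows essentially the same route as the paper: the first assertion is indeed just the preceding discussion, and for the second the paper likewise uses that distinct geodesics meet in at most one point so that a countable union of such intersections cannot cover $\Cg_B$, forcing some $\gamma\Cg_A=\Cg_B$ and hence $B=\pm\gamma A\gamma^{-1}$. The only (immaterial) difference is that you phrase it contrapositively and derive $B=\pm\gamma A\gamma^{-1}$ from the M\"obius-scalar computation with determinants, where the paper instead observes that a geodesic determines its defining trace-zero integral matrix up to sign via its endpoints in $\mathbf{P}^1(\R)$.
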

\begin{proof}
The first assertion follows from the discussion before the statement of the proposition. To prove the last assertion, notice that a geodesic $Az=\bar{z}$ in $\Hp$ is uniquely determined by its endpoints in $\mathbf{P}^1(\R)$, which are the fixed points of $A$. It follows that two matrices with integral coefficients, trace zero and determinant $-N$ giving rise to the same geodesic in $\Hp$ must be either equal or opposite. Take $A, B \in M_2(\Z)$ having distinct image in $\widetilde{Cl}^+(\Z[\sqrt{N}])$. Then for every $M \in SL_2(\Z)$ the geodesics $M\cdot \Cg_A$ and $\Cg_B$ are distinct, hence they have at most one point of intersection. Therefore $J(\Cg_A)\neq J(\Cg_B)$.
\end{proof}

\subsubsection{The curves $\mathcal{Z}_2$ and $\mathcal{Z}_3$} As the narrow class group of $\Z[\sqrt{2}]$ is trivial, the image of any geodesic attached to a matrix with integral coefficients, trace zero and determinant -2 is the curve $\mathcal{Z}_2$.

The narrow class group of $\Z[\sqrt{3}]$ has two elements. Two matrices with trace zero and determinant $-3$ which are not $SL_2(\Z)$-conjugate are $A^3_1=\begin{pmatrix}
0 & 3\\
1 & 0
\end{pmatrix}$ and $A^3_2=\begin{pmatrix}
0 & 1\\
3 & 0
\end{pmatrix}$. However $A^3_1$ is $SL_2(\Z)$-conjugate to $-A^3_2$, which gives rise to the same geodesic as $A^3_2$. Hence
\begin{equation*}
J(\Cg_{A^3_1})=J(\Cg_{A^3_2})=\mathcal{Z}_3(\R).
\end{equation*}

Notice however that there are two $SL_2(\Z)$-orbits of \emph{oriented} geodesics $\Cg_A$ with $A$ of trace zero and determinant $-3$.

The curve $\mathcal{Z}_2$ (resp. $\mathcal{Z}_3$) is represented below on the left (resp. right).

\begin{center}
\begin{figure}[h!]
\includegraphics[width=6.2 cm]{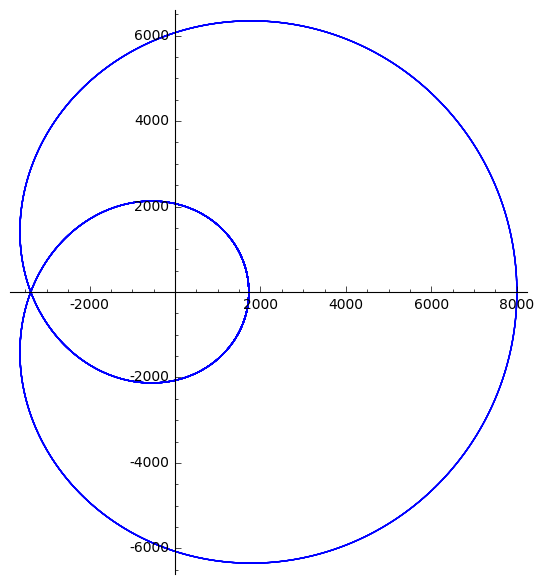} \includegraphics[width=6.2 cm]{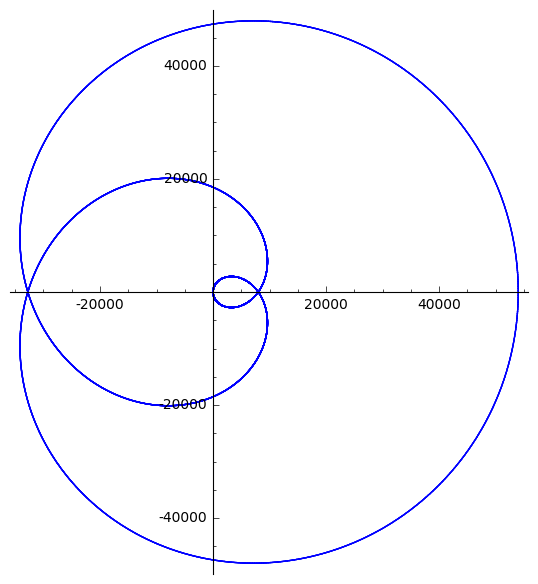}
\end{figure}
\end{center}

Generalising the previous example we obtain the following result.

\begin{corol}
Let $N>1$ be a squarefree integer congruent to 2 or 3 modulo 4. Assume that the class group of $\Z[\sqrt{N}]$ is trivial. Then $J(\Cg_A) = \mathcal{Z}_N(\R)$ for any $A \in M_2(\Z)$ with trace zero and determinant $-N$.
\end{corol}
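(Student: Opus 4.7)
The plan is to apply Proposition \ref{Znsqfree}, which identifies $\mathcal{Z}_N(\R)$ with $\bigcup_{A \in \widetilde{Cl}^+(\Z[\sqrt{N}])} J(\Cg_A)$. Under the hypothesis $Cl(\Z[\sqrt{N}]) = 1$, the statement reduces to showing that $\widetilde{Cl}^+(\Z[\sqrt{N}])$ is trivial, i.e.\ that any two matrices with trace zero and determinant $-N$ become equivalent once we allow both $SL_2(\Z)$-conjugation and the identification $A \sim -A$.

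The first step is to invoke the standard comparison between narrow and ordinary class groups of a real quadratic order: the surjection $Cl^+ \twoheadrightarrow Cl$ has kernel of order $1$ if the fundamental unit of $\Z[\sqrt{N}]$ has norm $-1$, and of order $2$ if it has norm $+1$. Since $N$ is squarefree and $\equiv 2, 3 \pmod 4$, $\Z[\sqrt{N}]$ is the maximal order of $\Q(\sqrt{N})$, and the hypothesis $Cl = 1$ gives $|Cl^+(\Z[\sqrt{N}])| \in \{1, 2\}$. When $|Cl^+| = 1$ the conclusion is immediate.

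The remaining case is $|Cl^+| = 2$, which forces the fundamental unit to have norm $+1$. Here I would argue that the involution $A \mapsto -A$ on $SL_2(\Z)$-conjugacy classes corresponds, under the bijection of \cite[Remark 9]{con}, to multiplication by the class of the ideal $(\sqrt{N})$ in $Cl^+(\Z[\sqrt{N}])$. This class is nontrivial: every generator of $(\sqrt{N})$ has the form $\pm\sqrt{N}\cdot u$ for a unit $u$, with norm $-N \cdot N(u) = -N < 0$ (since all units have norm $+1$), so no generator is totally positive. Consequently the involution swaps the two elements of $Cl^+$ and $\widetilde{Cl}^+$ consists of a single element.

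The main obstacle is justifying the identification of $A \mapsto -A$ with multiplication by $[(\sqrt{N})]$ cleanly. Conceptually, negating $A$ preserves the underlying $\Z[\sqrt{N}]$-module structure on $\Z^2$ (given by $\sqrt{N} \mapsto A$) but reverses the $\Z$-orientation, and such an orientation reversal can be absorbed into a module automorphism exactly when some unit has norm $-1$, which is precisely what fails here. A useful sanity check is the case $N = 3$ worked out above, where the relation $A^3_1 \sim_{SL_2(\Z)} -A^3_2$ realises exactly this swap of the two narrow classes.
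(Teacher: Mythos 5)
Your reduction to showing that $\widetilde{Cl}^+(\Z[\sqrt{N}])$ is a singleton, and the case split according to whether $\Z[\sqrt{N}]$ has a unit of norm $-1$, coincide with the paper's; from there the two arguments genuinely diverge. In the case where $X^2-NY^2=-1$ is insoluble the paper proceeds by explicit matrix computation: it takes the representatives $A^N_1=\left(\begin{smallmatrix}0&N\\1&0\end{smallmatrix}\right)$ and $A^N_2=\left(\begin{smallmatrix}0&1\\N&0\end{smallmatrix}\right)$, observes that any $SL_2(\Z)$-conjugate of $A^N_2$ has lower-left entry $Nd^2-c^2$, which can never equal $1$, so the two matrices realise the two narrow classes, and then conjugates $A^N_2$ by $\left(\begin{smallmatrix}0&1\\-1&0\end{smallmatrix}\right)$ to obtain $-A^N_1$. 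Your route replaces both computations by an ideal-theoretic description of the involution $A\mapsto -A$ on $Cl^+$, which is more conceptual and explains \emph{why} the two classes get identified; the paper's version is more elementary and self-contained, at the cost of depending on a particular choice of representatives.

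The step you flag as the main obstacle does, however, contain an inaccuracy. Negating $A$ does not preserve the $\Z[\sqrt{N}]$-module structure on $\Z^2$: it replaces it by the Galois-conjugate structure (now $\sqrt{N}$ acts by $-A$). Transporting back along the conjugation $\sigma$ replaces the associated lattice $I\subset F$ by $\bar{I}$ and reverses the orientation, since $\sigma$ has determinant $-1$ as a $\Q$-linear map of $F$. Hence the induced map on $Cl^+$ is $[I]\mapsto[(\sqrt{N})]\cdot[\bar{I}]=[(\sqrt{N})]\cdot[I]^{-1}$, i.e.\ multiplication by $[(\sqrt{N})]$ \emph{composed with inversion}, not multiplication by $[(\sqrt{N})]$ alone. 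This discrepancy is harmless under your hypotheses, because $Cl^+$ then has order at most $2$ and inversion is the identity; but the identification as you state it would fail for general $N$ (compare the remark following the corollary in the paper, where, in the presence of a unit of norm $-1$, the involution $A\mapsto -A$ is identified with $[I]\mapsto[I]^{-1}$, and the $N=82$ example shows it can be nontrivial even though $[(\sqrt{N})]$ is trivial there). With that correction, together with your (correct) verification that $[(\sqrt{N})]$ is nontrivial in $Cl^+$ precisely when no unit has norm $-1$, your argument closes and yields the statement.
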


\begin{proof}
If $\Z[\sqrt{N}]^\times$ contains an element of norm -1 then the narrow class group of $\Z[\sqrt{N}]$ is trivial, hence the statement follows from Proposition \ref{Znsqfree}. Now assume that the equation $X^2-NY^2=-1$ has no integral solution; in this case $Cl^+(\Z[\sqrt{N}])$ has two elements. Consider the two matrices $A^N_1=\begin{pmatrix}
0 & N\\
1 & 0
\end{pmatrix}$ and $A^N_2=\begin{pmatrix}
0 & 1\\
N & 0
\end{pmatrix}$. Given $\begin{pmatrix}
a & b\\
c & d
\end{pmatrix} \in SL_2(\Z)$ we have
\begin{equation*}
\begin{pmatrix}
a & b\\
c & d
\end{pmatrix}\begin{pmatrix}
0 & 1\\
N & 0
\end{pmatrix}\begin{pmatrix}
d & -b\\
-c & a
\end{pmatrix}=\begin{pmatrix}
Nbd-ac & a^2-Nb^2\\
Nd^2-c^2 & -Nbd+ac
\end{pmatrix}.
\end{equation*}
It follows that the matrices $A^N_1$ and $A^N_2$ are not $SL_2(\Z)$-conjugate, so they correspond to the two elements of $Cl^+(\Z[\sqrt{N}])$; on the other hand
\begin{equation*}
\begin{pmatrix}
0 & 1\\
-1 & 0
\end{pmatrix}\begin{pmatrix}
0 & 1\\
N & 0
\end{pmatrix}\begin{pmatrix}
0 & -1\\
1 & 0
\end{pmatrix}=\begin{pmatrix}
0 & -N\\
-1 & 0
\end{pmatrix}.
\end{equation*}
As $\Cg_{A^N_1}=\Cg_{-A^N_1}$ the result follows.
\end{proof}

\begin{rem}
For a general $N>1$ squarefree and congruent to 2, 3 modulo 4, the set $\widetilde{Cl}^+(\Z[\sqrt{N}])$ may have different cardinality from the class group of $\Z[\sqrt{N}]$. For example, assume that $\Z[\sqrt{N}]$ contains a unit of norm -1, so that $Cl^+(\Z[\sqrt{N}])=Cl(\Z[\sqrt{N}])$. Any matrix $A \in M_2(\Z)$ with trace zero and determinant $-N$ is $SL_2(\Z)$-conjugate to $-A^t$, via the matrix $\begin{pmatrix}
0 & 1\\
-1 & 0
\end{pmatrix}$. Hence identifying the equivalence class of each $A$ in $Cl^+(\Z[\sqrt{N}])$ with that of $-A$ is the same as identifying the equaivalence class of $A$ with that of $A^t$. As explained in \cite[Example 15]{con}, this amounts to identifying every element in the group $Cl(\Z[\sqrt{N}])$ with its inverse. Therefore $\widetilde{Cl}^+(\Z[\sqrt{N}])$ has smaller cardinality than $Cl^+(\Z[\sqrt{N}])=Cl(\Z[\sqrt{N}])$ as soon as the latter has an element of order greater than two. For example, take $N=82$. The matrix $\begin{pmatrix}
-1 & 27\\
3 & 1
\end{pmatrix}$ corresponds to the ideal $(3, 1+\sqrt{82})\subset \Z[\sqrt{82}]$, which has order 4 in the class group, and is not $SL_2(\Z)$-conjugate to its opposite.
\end{rem}

\subsubsection{The curve $\mathcal{Z}_{5}$} We have seen above that the curve $\mathcal{Z}_1$ is the union of two images of geodesics, each of which is not algebraic. Let us give further examples of this phenomenon.

There are two elements in $\widetilde{Cl}^+(\Z[\sqrt{5}])$ (since there is one invertible and one non-invertible ideal class for the ring $\Z[\sqrt{5}]$) corresponding to the matrices $A^5_1=\begin{pmatrix}
0 & 5\\
1 & 0
\end{pmatrix}$ and $A^5_2=\begin{pmatrix}
1 & 2\\
2 & -1
\end{pmatrix}$. The matrix $A^5_1$ is not $SL_2(\Z)$-conjugate to $\pm A^5_2$, hence we learn from Proposition \ref{Znsqfree} that $\mathcal{Z}_5(\R)=J(\Cg_{A^5_1})\cup J(\Cg_{A^5_2})$; furthermore each $J(\Cg_{A^5_i})$ is a compact curve (in the Euclidean topology) properly contained in $\mathcal{Z}_5(\R)$. We have
\begin{equation*}
\Cg_{A^5_1}: X^2+Y^2=5, \; \; \Cg_{A^5_2}: X^2+Y^2-X-1=0.
\end{equation*}
In particular $(2, 1) \in \Cg_{A^5_1}$ and $(0, 1) \in \Cg_{A^5_2}$, so the curves $J(\Cg_{A^5_1})$ and $J(\Cg_{A^5_2})$ intersect at $j(2+i)=j(i)$. Finally, it follows from Theorem \ref{modao} below that each $J(\Cg_{A^5_i})$ is not algebraic.

\subsubsection{The curve $\mathcal{Z}_{10}$} Let us finally describe the curve $\mathcal{Z}_{10}$. The narrow class group of $\Z[\sqrt{10}]$ coincides with its class group, and it has two elements. The two matrices
\begin{equation*}
A^{10}_1=\begin{pmatrix}
0 & 10\\
1 & 0
\end{pmatrix}, \; A^{10}_2=\begin{pmatrix}
0 & 5\\
2 & 0
\end{pmatrix}
\end{equation*}
are not $SL_2(\Z)$-conjugate; better, $A^{10}_1$ is not conjugate to $\pm A^{10}_2$. The curve $\Cg_{A^{10}_1}$ (resp. $\Cg_{A^{10}_2}$) is the upper half-circle with equation $X^2+Y^2=10$ (resp. $X^2+Y^2=\frac{5}{2}$). Proposition \ref{Znsqfree} tells us that $J(\Cg_{A^{10}_1})\cup J(\Cg_{A^{10}_2})=\mathcal{Z}_{10}(\R)$, and the inclusions $J(\Cg_{A^{10}_i})\subset \mathcal{Z}_{10}(\R)$ are strict. Notice that $(3, 1) \in \Cg_{A^{10}_1}$ and $\left(\frac{3}{2}, \frac{1}{2}\right) \in \Cg_{A^{10}_2}$. As $\frac{3}{2}+\frac{i}{2}=\frac{2i+1}{i+1}$ we have $j(3+i)=j(i)=j\left(\frac{3}{2}+\frac{i}{2}\right)$, hence the curves $J(\Cg_{A^{10}_i})$ intersect at $1728$. The situation is represented in the picture below, where the curve $J(\Cg_{A^{10}_1})$ (resp. $J(\Cg_{A^{10}_2})$) is depicted in blue (resp. red).

\begin{center}
\begin{figure}[h!]
\includegraphics[width=7.2 cm]{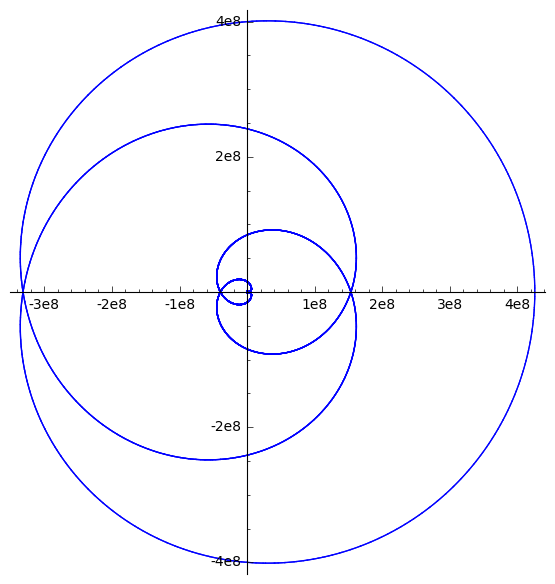} \includegraphics[width=7.2 cm]{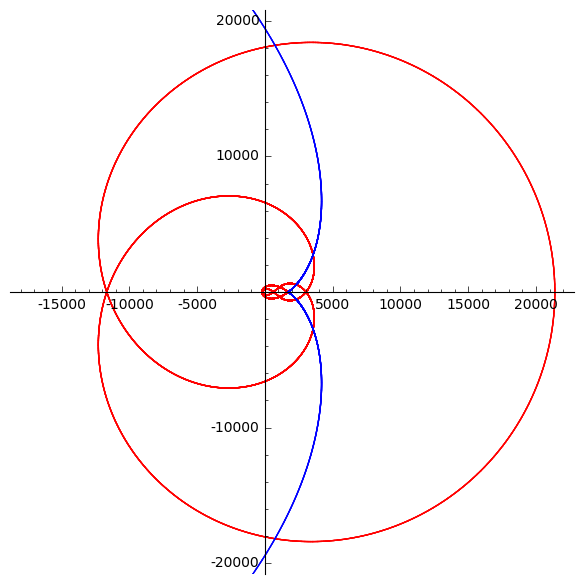}
\end{figure}
\end{center}

As before, the following André-Oort type statement in our situation implies that each $J(\Cg_{A^{10}_i})$ is not algebraic.

\begin{thm}\label{modao}
Let $\mathcal{C}\subset \mathbf{A}^2_\R$ be an irreducible curve. Assume that $\mathcal{C}(\R)$ contains infinitely many images of special points in $\Hp$. Then $\mathcal{C}(\R)=\mathcal{Z}_N(\R)$ for some $N \geq 1$.
\end{thm}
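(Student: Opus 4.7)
The plan is to apply the ``base change'' strategy already used in the proofs of Theorems \ref{special-real-exp} and \ref{bialg-mod-surf}, reducing the statement to the classical André--Oort theorem for $Y(1)^2$ (due to André).

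First, let $P \in \R[X, Y]$ be an irreducible polynomial with $\mathcal{C}(\R) = \{P = 0\}$. Since $\mathcal{C}(\R)$ is infinite, $P$ is irreducible in $\C[X, Y]$ as well. Define
\begin{equation*}
\tilde P(a, b) = P\bigl(\tfrac{a+b}{2}, \tfrac{a-b}{2i}\bigr) \in \C[a, b];
\end{equation*}
this is obtained from $P$ by the $\C$-linear change of variables $X = (a+b)/2$, $Y = (a-b)/(2i)$, hence is irreducible and defines an irreducible plane curve $C_{\tilde P} \subset \A^2_\C$.

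Next, I would observe that whenever $\tau \in \Hp$ is a special point, so is $-\bar\tau$: if $\tau$ satisfies a primitive integral quadratic $c\tau^2 + d\tau + e = 0$ of negative discriminant, then $-\bar\tau$ satisfies $c(-\bar\tau)^2 - d(-\bar\tau) + e = 0$, a primitive integral quadratic of the same discriminant. Using the identities
\begin{equation*}
\mathrm{Re}\, j(\tau) = \tfrac{j(\tau) + j(-\bar\tau)}{2}, \qquad \mathrm{Im}\, j(\tau) = \tfrac{j(\tau) - j(-\bar\tau)}{2i},
\end{equation*}
the condition $J(\tau) \in \mathcal{C}(\R)$ becomes $\tilde P(j(\tau), j(-\bar\tau)) = 0$. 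Thus $C_{\tilde P}$ contains the pair $(j(\tau), j(-\bar\tau))$, which is a special (CM) point of the Shimura variety $Y(1)^2 = \A^2_\C$. Because distinct images $J(\tau)$ correspond to distinct values of $j(\tau)$, the hypothesis of the theorem yields infinitely many distinct such pairs on $C_{\tilde P}$.

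Finally, I would invoke the André--Oort theorem for $Y(1)^2$: an irreducible curve in $\A^2_\C$ containing infinitely many special points must be either a horizontal line $\{a = j_0\}$ through a CM value $j_0$, a vertical line $\{b = j_0\}$ through such a $j_0$, or a modular correspondence $\{\Phi_N(a, b) = 0\}$ for some $N \geq 1$. The first two cases would force all the relevant values $j(\tau)$ to equal $j_0$, producing only one point in $\mathcal{C}(\R)$ and contradicting the hypothesis. Therefore $\tilde P = c\,\Phi_N$ for some $c \in \C^\times$ and $N \geq 1$; reversing the change of variables gives $P(X, Y) = c\,\tilde\Phi_N(X, Y)$, and since $P$ is real, $c \in \R^\times$. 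Hence $\mathcal{C}(\R) = \mathcal{Z}_N(\R)$, as required. The only non-routine input is the André--Oort theorem for $Y(1)^2$, which plays the same role here that Theorem \ref{alw-modular} plays in the proof of Theorem \ref{bialg-mod-surf}; I do not anticipate any further obstacle.
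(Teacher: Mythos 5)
Your proposal is correct and follows essentially the same route as the paper: both apply the linear change of variables $(x,y)\mapsto(x+iy,\,x-iy)$ to transport $\mathcal{C}$ to an irreducible curve in $\A^2_\C$ containing the infinitely many CM points $(j(\tau), j(-\bar\tau))$, rule out horizontal and vertical lines because $\mathcal{C}(\R)$ is infinite, and invoke André's theorem for $Y(1)^2$ to identify the curve with $\{\Phi_N=0\}$. The only cosmetic difference is that you perform the substitution on the polynomial $P$ while the paper pushes forward the complexified curve under $f$, which amounts to the same thing.
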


\begin{proof}
We can write $\mathcal{C}(\R)=\{(x, y) \in \R^2 \mid P(x, y)=0\}$ for some geometrically irreducible polynomial $P(X, Y) \in \R[X, Y]$. Let $\mathcal{C}_\C\subset \mathbf{A}^2_\C$ be the base change of $\mathcal{C}$ to $\C$. Consider the map
\begin{align*}
f: \C^2 & \rightarrow \C^2\\
(x, y) & \mapsto (x+iy, x-iy).
\end{align*}
Observe that $(x, y) \in \R^2$ is the image of a special point in $\Hp$ if and only if $f(x, y) \in \C^2$ is the image of a couple of special points in $\Hp \times \Hp$. Therefore the image $\tilde{\mathcal{C}}=f(\mathcal{C}_\C)$ is an irrreducible algebraic curve in $\C^2$ containing infinitely many special points. The curve $\tilde{\mathcal{C}}$ cannot be vertical: indeed, in this case there would be $c \in \C$ such that $x+iy=c$ for every $(x, y) \in \mathcal{C}(\R)$, contradicting the fact that $\mathcal{C}(\R)$ is infinite. For the same reason $\tilde{\mathcal{C}}$ cannot be horizontal. Hence by \cite{and98} the curve $\tilde{C}$ is the vanishing locus of a modular polynomial $\Phi_N(X, Y)$. Therefore
\begin{equation*}
\mathcal{C}(\R)=\{(x, y) \in \R^2 \mid \Phi_N(f(x, y))=0\}=\mathcal{Z}_N(\R).
\end{equation*}
\end{proof}

\bibliographystyle{amsalpha}
\bibliography{bialg}

\Addr

\end{document}